\documentclass[10pt]{elsarticle}
\usepackage{amsmath,amssymb,amsthm,amscd}

\def \N {\mathbb{N}}

\def \A {\mathcal{A}}
\def \B {\mathcal{B}}
\def \S {\mathcal{S}}
\def \ol {\overline}
\def \sgp {\operatorname{sgp}}

\usepackage{tikz}
\usetikzlibrary{calc}

\usetikzlibrary{automata,shapes.geometric,shapes.misc,shapes.symbols}

\tikzset{
  dominopicture/.style={
    baseline=0mm,
    dominoinode/.style={
      font=\small,
      inner sep=0mm,
      outer sep=0mm,
      minimum height=2.5mm,
      minimum width=2.5mm,
      execute at begin node=\strut
    },
  },
  smalldominopicture/.style={
    baseline=-.75mm,
    dominoinode/.style={
      font=\small,
      inner sep=0mm,
      outer sep=0mm,
      minimum height=2mm,
      minimum width=1mm,
    },
  },
}

\newcommand\domino[3][dominopicture]{
  \begin{tikzpicture}[#1]
    \node[anchor=base,dominoinode] (lnode) at (0,0) {$#2$};
    \node[anchor=base west,dominoinode] (rnode) at ($ (lnode.base
east) + (1mm,0) $) {$#3$};
    \coordinate (nw) at ($ (current bounding box.north west) + (-1mm,.15mm) $);
    \coordinate (se) at ($ (current bounding box.south east) + (1mm,-.15mm) $);
    \coordinate (mid) at ($ (lnode.east)!.5!(rnode.west) $);
    \draw ($ (mid |- nw) $)--($ (mid |- se) $);
    \draw (nw) rectangle (se);
  \end{tikzpicture}
}
\newcommand\tablet[3][dominopicture]{
  \begin{tikzpicture}[#1]
    \node[anchor=base,dominoinode] (lnode) at (0,0) {$#2$};
    \node[anchor=base west,dominoinode] (rnode) at ($ (lnode.base
east) + (1mm,0) $) {$#3$};
    \coordinate (nw) at ($ (current bounding box.north west) + (.5mm,.15mm) $);
    \coordinate (se) at ($ (current bounding box.south east) +
(-.5mm,-.15mm) $);
    \coordinate (mid) at ($ (lnode.east)!.5!(rnode.west) $);
    \draw ($ (mid |- nw) $)--($ (mid |- se) $);
    \draw (nw) -- (nw -| se);
    \draw (se) -- (se -| nw);
    \coordinate (emid) at ($ (nw -| se)!.5!(se) $);
    \coordinate (e) at ($ (emid)!1!90:(se) $);
    \coordinate (wmid) at ($ (se -| nw)!.5!(nw) $);
    \coordinate (w) at ($ (wmid)!1!90:(nw) $);
    \draw[out=0,in=90] (nw -| se) to (e);
    \draw[out=-90,in=0] (e) to (se);
    \draw[out=180,in=-90] (se -| nw) to (w);
    \draw[out=90,in=180] (w) to (nw);
  \end{tikzpicture}
}

\newcommand\domi{\domino[smalldominopicture]{}{}\kern.3mm}
\newcommand\tabl{\tablet[smalldominopicture]{}{}\kern.3mm}

\newcommand{\fsa}[1]{\mathcal{#1}}
\newcommand{\defterm}[1]{\textit{#1}}

\DeclareMathOperator{\End}{End}

\let\emptyword=\varepsilon
\newcommand{\rel}[1]{\mathcal{#1}}

\newtheorem{theorem}{Theorem}
\newtheorem{proposition}[theorem]{Proposition}
\newtheorem{lemma}[theorem]{Lemma}

\newtheorem{example}[theorem]{Example}
\newtheorem{question}[theorem]{Question}

\theoremstyle{definition}
\newtheorem{definition}[theorem]{Definition}

\usepackage{hyperref}

\parskip 2ex
\parindent 0pt
\begin{document}

\begin{frontmatter}

\title{Automaton semigroups: new constructions results and
examples of non-automaton semigroups}

\author[a]{Tara Brough\corref{cor1}\fnref{fn1}}
\ead{trbrough@fc.ul.pt}

\author[b]{Alan J.\ Cain\corref{cor2}\fnref{fn2}}
\ead{a.cain@fct.unl.pt}

\date{}

\cortext[cor1]{Principal Corresponding Author}
\cortext[cor2]{Corresponding Author}
\address[a]{Centro de Matem\'{a}tica Computacional e Estoc\'{a}stica,\\
Departamento de Matem\'{a}tica, Faculdade de Ci\^{e}ncias, Universidade de Lisboa,\\ 1749-016 Lisboa, Portugal }
\address[b]{Centro de Matem\'{a}tica e Aplica\c{c}\~{o}es, Faculdade de Ci\^{e}ncias e Tecnologia \\
Universidade Nova de Lisboa, 2829--516 Caparica, Portugal}

\fntext[fn1]{Much of the research leading to this paper was undertaken during a visit by the first author to the Centro
  de Matem\'{a}tica e Aplica\c{c}\~{o}es, Universidade Nova de Lisboa. We thank the centre and university for their
  hospitality. This visit was partially supported by the \textsc{FCT} exploratory project
  \textsc{IF}/01622/2013/\textsc{CP}1161/\textsc{CT}0001 (attached to the second author's fellowship). The first
  author was also supported by the FCT project CEMAT-CI\^{E}NCIAS (UID/Multi/04621/2013).}
\fntext[fn2]{The second author was
  supported by an Investigador \textsc{FCT} fellowship (\textsc{IF}/01622/\allowbreak 2013/\textsc{CP}1161/{\sc
    CT}0001). This work was partially supported by the Funda\c{c}\~{a}o para a Ci\^{e}ncia e a Tecnologia (Portuguese
  Foundation for Science and Technology) through the project {\sc UID}/{\sc MAT}/00297/2013 (Centro de Matem\'{a}tica e
  Aplica\c{c}\~{o}es).}

\begin{abstract}
  This paper studies the class of automaton semigroups from two perspectives: closure under constructions, and examples
  of semigroups that are not automaton semigroups. We prove that (semigroup) free products of finite semigroups always
  arise as automaton semigroups, and that the class of automaton monoids is closed under forming wreath products with
  finite monoids.  We also consider closure under certain kinds of Rees matrix constructions, strong semilattices, and
  small extensions. Finally, we prove that no subsemigroup of $(\mathbb{N},+)$ arises as an automaton
  semigroup. (Previously, $(\mathbb{N},+)$ itself was the unique example of a semigroup having the `general' properties
  of automaton semigroup (such as residual finiteness, solvable word problem, etc.) but that was known not to arise as
  an automaton semigroup.)
\end{abstract}

\begin{keyword}
Automaton semigroup \sep Free product \sep Wreath product \sep Constructions \sep Small extensions
\MSC[2010]   20M35 \sep 68Q45 \sep 68Q70
\end{keyword}

\end{frontmatter}

\section{Introduction}

Automaton semigroups (that is, semigroups of endomorphisms of rooted trees generated by the actions of Mealy automata)
emerged as a generalisation of automaton groups, which arose from the construction of groups having `exotic' properties,
such as the finitely generated infinite torsion group found by Grigorchuk \cite{grigorchuk_burnside}, and later
proven to have intermediate growth, again by Grigorchuk \cite{grigorchuk_degrees}. The topic of automaton groups has
since developed into a substantial theory; see, for example, Nekrashevych's monograph \cite{nekrashevych_self} or one of
the surveys by the school led by Bartholdi, Grigorchuk, Nekrashevych, and \v{S}uni\'{c}
\cite{bartholdi_fractal,bartholdi_branch,grigorchuk_self}.

After the foundational work of Grigorchuk, Nekrashevych \& Sushchanskii~\cite[esp.~Sec.~4 \&~Subsec.~7.2]{grigorchuk_automata},
the theory of automaton semigroups has grown into an active research topic. Broadly speaking, there have been two foci
of research. First, the study of decision problems: what can be effectively decided about the semigroup generated by
a given automaton? For example, the finiteness and torsion problems are now known to be undecidable for general
automaton semigroups \cite{gillibert_finiteness}, but particular special cases are decidable
\cite{klimann_implementing,klimann_finiteness,maltcev_cayley}. Second, the study of the class of automaton semigroups:
which semigroups arise and do not arise as automaton semigroups? Two particular aspects of this question are whether the
class of automaton semigroups is closed under various semigroup constructions, and giving examples of semigroups that do
not arise as automaton semigroups. This paper is concerned with both of these aspects.

For some constructions, such as direct products and adjoining a zero or identity, it is straightforward to prove that
the class is closed; see \cite[Section~5]{c_1auto}. For many other natural constructions, the question of closure
remains open. For example, whether automaton semigroups are closed under free products is an open question. (This is
related to the problem of showing that all free groups arise as automaton groups; the recent positive answer to this
question was the culmination of the work a series of authors; see \cite{steinberg_automata} and the references therein.)
The free product of automaton semigroups is, however, at least very close to being an automaton semigroup:
in a previous paper, we showed that $(S\star T)^1$ is always an automaton semigroup if $S$ and $T$ are
\cite[Theorem~3]{bc_freeprodauto}.

Since closure under free products seemed difficult to settle, the second author asked whether free products of finite
semigroups always arise as automaton semigroups \cite[Open problem~5.8(1)]{c_1auto}. In \cite[Conjecture~5]{bc_freeprodauto},
we conjectured that the answer was `no', and suggested a potential counterexample. However, in
this paper we prove that the answer is `yes': free products of finite semigroups always arise as automaton semigroups
(Theorem~\ref{thm:finfree}). This parallels the result that (group) free products of finite groups arise as automaton groups
\cite{gupta_freeprod}.  More generally, we show in Theorem~\ref{thm:freeprodidpt} that the free product of
automaton semigroups each containing an idempotent is always an automaton semigroup.

In our previous paper, we also considered whether a wreath product $S \wr T$, where $S$ is an automaton monoid and $T$ is
a finite monoid, was necessarily an automaton monoid. We managed to prove that such a wreath product arises as a
\emph{submonoid} of an automaton monoid. In this paper, we obtain a complete answer: all such wreath products arise as
automaton monoids (Theorem \ref{thm:wreath}).

We consider whether a Rees matrix semigroup over an automaton semigroup is also an automaton semigroup. We do not have a
complete answer, but we prove that this holds under certain restrictions (Proposition~\ref{prop:rees}).
This is a step towards classifying completely
simple automaton semigroups \cite[Open problem~5.8(3)]{c_1auto}.

We prove that a certain kind of strong semilattice of automaton semigroups is itself an automaton semigroup
(Proposition~\ref{prop:semilattice}). This result is then applied when we turn to the question of whether a small
extension of an automaton semigroup is necessarily an automaton semigroup. (Recall that if $S$ is a semigroup and $T$ is
a subsemigroup of $S$ with $S \setminus T$ finite, then $S$ is a small extension of $T$ and $T$ is a large subsemigroup
of $S$.) Many finiteness properties are known to be preserved on passing to large subsemigroups and small extensions;
see the survey \cite{cm_finreessurvey}. It is already known that a large subsemigroup of an automaton semigroup is not
necessarily an automaton semigroup, for $(\N\cup\{0\},+)$ is an automaton semigroup but $(\N,+)$ is not; see
\cite[Section~5]{c_1auto}. (This example also shows that adding an identity to a non-automaton semigroup can give an
automaton monoid; thus the classes of automaton semigroups and monoids seem to be very different.) We
do not have a complete answer to the question of closure under small extensions, but we prove some special cases in
Section~\ref{sec:smallext}. The importance of these results is that \emph{if} the class of automaton semigroups is not
closed under forming small extensions, then we have eliminated several standard constructions as potential sources of
counterexamples.

In all of the automaton constructions in this paper, we use alphabets of symbols consisting at least
partially of tuples of symbols from the automata for the `base' semigroups of the construction.
This seems to be quite a powerful approach, as it allows the automaton to have access to a lot of
information at each transition.

Finally, we present new examples of semigroups that do not arise as automaton semigroups. This is an important advance,
because a major difficulty in studying the class of automaton semigroup is that if a semigroup has the properties that
automaton semigroups have generally, such as residual finiteness \cite[Proposition~3.2]{c_1auto}, solvable word problem,
etc., then there are no general techniques for proving it is not an automaton semigroup. In the pre-existing literature,
there is a unique example of a semigroup that has these `general' automaton semigroup properties but that is known
\emph{not} to arise as an automaton semigroup: namely, the free semigroup of rank $1$ (or, if one prefers, $(\N,+)$)
\cite[Proposition 4.3]{c_1auto}. We prove that no subsemigroup of this semigroup arises as an automaton semigroup, and
indeed that no subsemigroup of this semigroup with a zero adjoined arises as an automaton semigroup
(Theorem~\ref{thm:nosubsemigroupofn}). Although our proof is specialised, and thus still leaves open the problem of
finding a general technique for proving that a semigroup is not an automaton semigroup, we at least now have a
countable, rather than singleton, class of non-automaton semigroups that satisfy the usual general properties of
automaton semigroups.

\section{Preliminaries}

In this section we briefly recall the necessary definitions and concepts required in the rest of the paper. For a fuller
introduction to automaton semigroups, see the discussion and examples in \cite[Sect.~2]{c_1auto}.

An \defterm{automaton} $\fsa{A}$ is formally a triple $(Q,B,\delta)$,
where $Q$ is a finite set of \defterm{states}, $B$ is a finite
alphabet of \defterm{symbols}, and $\delta$ is a transformation of the
set $Q \times B$. The automaton $\fsa{A}$ is normally viewed as a
directed labelled graph with vertex set $Q$ and an edge from $q$ to
$r$ labelled by $x \mid y$ when $(q,x)\delta = (r,y)$:
\[
\begin{tikzpicture}[every state/.style={inner sep=1mm,minimum size=1.75em}]
\node[state] (q) at (0,0) {$q$};
\node[state] (r) at (2,0) {$r$};
\draw[->] (q) edge node[anchor=south] {$x\mid y$} (r);
\end{tikzpicture}
\]
The interpretation of this is that if the automaton $\fsa{A}$ is in
state $q$ and reads symbol $x$, then it changes to the state $r$ and
outputs the symbol $y$. Thus, starting in some state $q_0$, the
automaton can read a sequence of symbols
$\alpha_1\alpha_2\ldots\alpha_n$ and output a sequence
$\beta_1\beta_2\ldots\beta_n$, where $(q_{i-1},\alpha_i)\delta =
(q_i,\beta_i)$ for all $i = 1,\ldots,n$.

Such automata are more usually known in computer science as deterministic real-time (synchronous) transducers, or Mealy
machines. In the field of automaton semigroups and groups, they are simply called `automata' and this paper retains this
terminology.

Each state $q \in Q$ acts on $B^*$, the set of finite sequences of
elements of $B$. The action of $q \in Q$ on $B^*$ is defined as
follows: $\alpha \cdot q$ (the result of $q$ acting on $\alpha$) is
defined to be the sequence the automaton outputs when it starts in the
state $q$ and reads the sequence $\alpha$. That is, if $\alpha =
\alpha_1\alpha_2\ldots\alpha_n$ (where $\alpha_i \in B$), then $\alpha
\cdot q$ is the sequence $\beta_1\beta_2\ldots\beta_n$ (where $\beta_i
\in B$), where $(q_{i-1},\alpha_i)\delta = (q_i,\beta_i)$ for all $i =
1,\ldots,n$, with $q_0 = q$.

The set $B^*$ can be identified with an ordered regular tree of degree
$|B|$. The vertices of this tree are labelled by the elements of
$B^*$. The root vertex is labelled with the empty word $\emptyword$,
and a vertex labelled $\alpha$ (where $\alpha \in B^*$) has $|B|$
children whose labels are $\alpha\beta$ for each $\beta \in B$. It is
convenient not to distinguish between a vertex and its label, and thus
one normally refers to `the vertex $\alpha$' rather than `the vertex
labelled by $\alpha$'. (Figure~\ref{fig:rootedtree} illustrates the
tree corresponding to $\{0,1\}^*$.)

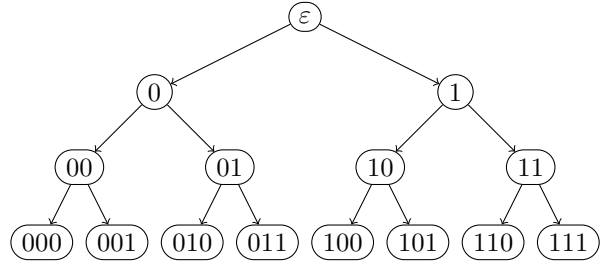
\begin{figure}[tb]
\centering
\begin{tikzpicture}[every node/.style={rounded rectangle,draw},every child/.style={->},
level distance=10mm,level 1/.style={sibling distance=40mm},level 2/.style={sibling distance=20mm},level 3/.style={sibling distance=10mm}]
  \node (root) {$\emptyword$}
    child { node (0) {$0$}
      child { node (00) {$00$}
        child { node (000) {$000$} }
        child { node (001) {$001$} } }
      child { node (01) {$01$}
        child { node (010) {$010$} }
        child { node (011) {$011$} } } }
    child { node (1) {$1$}
      child { node (10) {$10$}
        child { node (100) {$100$} }
        child { node (101) {$101$} } }
      child { node (11) {$11$}
        child { node (110) {$110$} }
        child { node (111) {$111$} } } };
\end{tikzpicture}
\caption{The set $\{0,1\}^*$ viewed as a rooted binary tree.}
\label{fig:rootedtree}
\end{figure}

The action of a state $q$ on $B^*$ can thus be viewed as a
transformation of the corresponding tree, sending the vertex $w$ to
the vertex $w \cdot q$. Notice that, by the definition of the action
of $q$, if $\alpha\alpha' \cdot q = \beta\beta'$ (where $\alpha,\beta
\in B^*$ and $\alpha',\beta' \in B$), then $\alpha \cdot q =
\beta$. In terms of the transformation on the tree, this says that if
one vertex ($\alpha$) is the parent of another ($\alpha\alpha'$), then
their images under the action by $q$ are also parent ($\beta$) and
child ($\beta\beta'$) vertices. More concisely, the action of $q$ on
the tree preserves adjacency and is thus an endomorphism of the
tree. Furthermore, the action's preservation of lengths of sequences
becomes a preservation of levels in the tree.

The actions of states extends naturally to actions of words: $w =
w_1\cdots w_n$ (where $w_i \in Q$) acts on $\alpha \in B^*$ by
\[
(\cdots((\alpha \cdot w_1)\cdot w_2) \cdots w_{n-1})\cdot w_n.
\]

So there is a natural homomorphism $\phi : Q^+ \to \End B^*$, where
$\End B^*$ denotes the endomorphism semigroup of the tree $B^*$. The
image of $\phi$ in $\End B^*$, which is necessarily a semigroup, is
denoted $\Sigma(\fsa{A})$.

A semigroup $S$ is called an \defterm{automaton semigroup} if there
exists an automaton $\fsa{A}$ such that $S \simeq \Sigma(\fsa{A})$.

It is often more convenient to reason about the action of a state or
word on a single sequence of infinite length than on sequences of some
arbitrary fixed length. The set of infinite sequences over $B$ is
denoted $B^\omega$. The infinite sequence consisting of countably many
repetitions of the finite word $\alpha \in B^*$ is denoted
$\alpha^\omega$. For synchronous automata, the action on infinite
sequences determines the action on finite sequences and \emph{vice
  versa}.

The following lemma summarises the conditions under which two words
$w$ and $w'$ in $Q^+$ represent the same element of the automaton
semigroup. The results follow immediately from the definitions, but are
so fundamental that they deserve explicit statement:

\begin{lemma}
\label{lem:eq}
Let $w,w' \in Q^+$. Then the following are equivalent:
\begin{enumerate}
\item $w$ and $w'$ represent the same element of $\Sigma(\fsa{A})$;
\item $w\phi = w'\phi$;
\item $\alpha\cdot w = \alpha\cdot w'$ for each $\alpha \in B^*$;
\item $w$ and $w'$ have the same actions on $B^n$ for every $n \in
  \N_0$;
\item $w$ and $w'$ have the same actions on $B^\omega$.
\end{enumerate}
\end{lemma}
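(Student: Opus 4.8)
The plan is to establish the five conditions as a chain of biconditionals, observing that most of the equivalences are immediate from the definitions and that only the passage to infinite sequences uses the synchronous nature of the automaton. First I would note that (i) $\Leftrightarrow$ (ii) is purely definitional: by construction $\Sigma(\fsa{A})$ is the image of $\phi\colon Q^+ \to \End B^*$, so $w$ and $w'$ name the same element of $\Sigma(\fsa{A})$ precisely when $w\phi = w'\phi$. Next, (ii) $\Leftrightarrow$ (iii) holds because $w\phi$ and $w'\phi$ are both maps $B^* \to B^*$, and two maps coincide exactly when they agree on every argument; since $\alpha \cdot w$ is by definition $\alpha(w\phi)$, this says $w\phi = w'\phi$ iff $\alpha \cdot w = \alpha \cdot w'$ for all $\alpha \in B^*$. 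The equivalence (iii) $\Leftrightarrow$ (iv) is then just the remark that $B^* = \bigcup_{n \in \N_0} B^n$ is partitioned by length, so agreement on all of $B^*$ is the same as agreement on each level $B^n$ separately.

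The one step with genuine content is the equivalence of these with (v). Here I would invoke the prefix-preservation property recorded before the lemma: if $\alpha\alpha' \cdot q = \beta\beta'$ with $\alpha',\beta' \in B$, then $\alpha \cdot q = \beta$. This extends from a single state $q$ to an arbitrary word $w \in Q^+$, since the action of $w$ is a composition of state actions, each of which preserves prefixes; hence for any $w$ and any finite prefix $\alpha$ of a (finite or infinite) sequence, the length-$|\alpha|$ prefix of the image equals $\alpha \cdot w$. For (iv) $\Rightarrow$ (v): given $\xi \in B^\omega$ and $n \in \N$, the length-$n$ prefix of $\xi \cdot w$ equals $\xi_{[n]} \cdot w$, where $\xi_{[n]}$ denotes the length-$n$ prefix of $\xi$; by (iv) this equals $\xi_{[n]} \cdot w'$, which is the length-$n$ prefix of $\xi \cdot w'$. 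As this holds for every $n$, we conclude $\xi \cdot w = \xi \cdot w'$. For (v) $\Rightarrow$ (iv): given $\alpha \in B^n$, fix any $\beta \in B$ and apply (v) to $\alpha\beta^\omega$; taking length-$n$ prefixes of $(\alpha\beta^\omega)\cdot w = (\alpha\beta^\omega)\cdot w'$ yields $\alpha \cdot w = \alpha \cdot w'$.

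With every equivalence in the list established, all five conditions coincide. I expect no real obstacle: the only point where anything beyond unwinding definitions is required is the synchronous, prefix-preserving behaviour of the automaton, which is precisely what allows the finite-level description (iv) and the single-infinite-sequence description (v) to determine one another.
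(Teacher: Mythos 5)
Your proof is correct and is in essence what the paper intends: the paper offers no argument at all for this lemma, stating only that the equivalences ``follow immediately from the definitions'', and your write-up is exactly that routine unwinding. You correctly isolate the one step with any content, namely (iv)$\Leftrightarrow$(v), and handle it properly via the prefix-preservation property of synchronous actions, including the $\alpha\beta^\omega$ device for (v)$\Rightarrow$(iv).
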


Generally, there is no need to make a notational distinction between
$w$ and $w\phi$. Thus $w$ denotes both an element of $Q^+$ and the
image of this word in $\Sigma(\fsa{A})$. In particular, one writes `$w
= w'$ in $\Sigma(\fsa{A})$' instead of the strictly correct `$w\phi =
w'\phi$'. With this convention, notice that $Q$ generates
$\Sigma(\fsa{A})$.

Some further notation is required for the rest of the paper:
For $w \in Q^+$, define $\tau_w : B \to B$ by $b \mapsto b \cdot
w$. For $b \in B$, define $\pi_b : Q \to Q$ by $q \mapsto r$ if
$(q,b)\delta = (r,x)$ for some $x \in B$ (in fact, $x = b\tau_q$). So
$q\pi_b$ is the state to which the edge from $q$ labelled by
$b\mid\cdot$ leads. Thus $(q,b)\delta = (q\pi_b,b\tau_q)$.

Further, let $w \in Q^+$. For any $\alpha \in B^*$, there is a unique $w|_\alpha \in \End B^*$ such that
$\alpha\beta\cdot w = (\alpha\cdot w)(\beta\cdot w|_\alpha)$; see \cite{nekrashevych_self} for details. Notice that if $w,w' \in Q^+$ are equal in $\Sigma(\fsa{A})$, then $w|_\alpha = w'|_\alpha$ for all
$\alpha \in B^*$.

We now recall the notion of \defterm{wreath recursions}. The endomorphism semigroup of $B^*$ decomposes as a recursive
wreath product:
\[
\End B^* = \End B^* \wr \rel{T}_B,
\]
where $\rel{T}_B$ is the transformation semigroup of the set $B$. That is,
\[
\End B^* = \big(\underbrace{\End B^* \times \ldots \times \End B^*}_{\text{$|B|$ times}}\big) \rtimes \rel{T}_B
\]
where $\rel{T}_B$ acts from the right on the coordinates of elements
of the direct product of $|B|$ copies of $\End B^*$.

If $p \in \End B^*$ with $p = (x_0,x_1,\ldots,x_{|B|-1})\tau$, then
$\tau$ describes the action of $p$ on $B$ and each $x_i$ is an element
of $\End B^*$ whose action on $B^*$ mirrors the action of $p$ on the
subtree $b_iB^*$. Alternatively: to act on $B^*$ by $p$, act on each
subtree $b_iB^*$ by $x_i$, and then act on the collection of the
resulting subtrees according to $\tau$.

Thus, if $p,q \in \End B^*$ with $p = (x_0,x_1,\ldots,x_{|B|-1})\tau$ and $q = (y_0,y_1,\ldots,y_{|B|-1})\rho$, where
$\tau,\rho \in \rel{T}_B$ and $x_i,y_j \in \End B^*$, then
\begin{equation}
pq = (x_0y_{0\tau},x_1y_{1\tau},\ldots,x_{|B|-1}y_{(|B|-1)\tau})\tau\rho. \label{eq:wreath}
\end{equation}

If $p \in Q$, then $\tau = \tau_p$ and $x_i = p\pi_i$. That is,
\[
p = (p\pi_1,p\pi_2,\ldots,p\pi_{|B|-1})\tau_p.
\]
This description of the action of $p$ is called a \defterm{wreath
  recursion}. Its primary use is to calculate, by means of the
multiplication given in \eqref{eq:wreath}, the action of a word $w \in
Q^+$ on $B^*$.

\section{Free products}

The \emph{free product} of two semigroups $S = \sgp \langle X_1 \mid R_1\rangle$ and
$T = \sgp \langle X_2 \mid R_2\rangle$, denoted $S\star T$, is the semigroup
with presentation $\sgp \langle X_1\cup X_2 \mid R_1\cup R_2\rangle$.

In \cite[Conjecture~5]{bc_freeprodauto}, the present authors conjectured that
there exist finite semigroups $S$ and $T$ such that $S\star T$ is not an
automaton semigroup.  We begin by showing that this is not the case.

\begin{theorem}\label{thm:finfree}
Let $S$ and $T$ be finite semigroups.  Then $S\star T$ is an automaton semigroup.
\end{theorem}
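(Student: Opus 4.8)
The plan is to construct an explicit automaton $\fsa{A}$ over an alphabet built from the elements of $S$ and $T$ (plus a few auxiliary symbols) whose generated semigroup is $S \star T$. The key idea is to represent an element of $S \star T$ — which is an alternating product $s_1 t_1 s_2 t_2 \cdots$ of nonempty blocks from $S$ and $T$ — by letting a word in the states act on an infinite sequence in such a way that the output sequence, read off in blocks, records the normal form. Concretely, I would take one state $\underline{s}$ for each $s \in S$ and one state $\underline{t}$ for each $t \in T$ (so $Q = S \sqcup T$, identifying these with generating sets), and design the transitions so that when $\underline{s}$ reads a symbol, it multiplies "on the left" the portion of the word currently recording an $S$-block, while passing unchanged over portions recording $T$-blocks, and symmetrically for $\underline{t}$. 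The alphabet $B$ will consist of tuples carrying a symbol from $S \cup \{\ast\}$ and a symbol from $T \cup \{\ast\}$ (or a similar device), so that at each position the automaton "knows" whether it is currently inside an $S$-block or a $T$-block and can act or stay passive accordingly.

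The main steps, in order, would be: (1) fix a convenient set of generators for $S$ and $T$ (just take all elements) and describe the normal form of an arbitrary element of $S \star T$ as an alternating word; (2) define the alphabet $B$ and the transition function $\delta$ — the delicate part is arranging that reading the first symbol "commits" the automaton to a block type and that subsequent symbols correctly multiply within that block or skip it; (3) prove, via the wreath-recursion multiplication \eqref{eq:wreath}, that the map sending the alternating word $s_1 t_1 s_2 \cdots$ in $S \star T$ to the corresponding product of states in $\Sigma(\fsa{A})$ is well-defined and injective, using Lemma~\ref{lem:eq}(v) — i.e. distinct normal forms act differently on $B^\omega$; (4) conversely, show every state-word reduces, in $\Sigma(\fsa{A})$, to one coming from a normal form, so the map is onto. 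A clean way to organise (3) and (4) is to exhibit, for each element $w \in S\star T$ in normal form, a single "witness" sequence $\alpha_w \in B^\omega$ such that $\alpha_w \cdot w$ encodes $w$ unambiguously, and to check that the defining relations $R_1 \cup R_2$ of the free product hold in $\Sigma(\fsa{A})$ while nothing more does.

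I expect the main obstacle to be step (2) together with the injectivity half of step (3): getting the transitions right so that (a) multiplication happens "in the correct block" and only there, (b) the automaton never irreversibly loses information that would let two different normal forms collapse, and (c) the relations defining $S$ and $T$ separately \emph{do} hold — so that, for instance, two words representing the same element $s \in S$ give the same state-word action — while the free-product structure (no relations \emph{between} $S$ and $T$) is faithfully reflected. Since the paper emphasises alphabets made of tuples precisely because this gives the automaton "access to a lot of information at each transition," I anticipate the construction will equip each alphabet symbol with enough bookkeeping (block type, perhaps a marker for the leftmost position) that the verifications in (3)–(4) become a finite, if somewhat intricate, case analysis using \eqref{eq:wreath}. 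Once the automaton is pinned down correctly, the equivalence with $S \star T$ should follow by the standard strategy: check the presentation relations hold, then check faithfulness on $B^\omega$ via Lemma~\ref{lem:eq}.
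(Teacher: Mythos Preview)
Your overall strategy---take $Q = S \sqcup T$, use a tuple alphabet, and recover the alternating normal form from the action on a fixed infinite sequence---matches the paper's. But there is a genuine missing idea, and it lies exactly where you flag the main obstacle (your point (c)): you never say what state the automaton transitions to \emph{after} a state $\underline{s}$ has recorded its contribution to the current block. This is not bookkeeping in the alphabet; it is a choice of target state, and with $Q = S \sqcup T$ the only candidates are elements of $S$ or $T$.

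Here is why it matters. Suppose $\underline{s}$ writes $s$ into the current symbol and then moves to some state $q$. Acting by a word $s_1\cdots s_n \in S^+$ then causes the tail of the input to be acted on by a length-$n$ product of such target states---in the simplest design, by $q^n$. For two words of different lengths representing the same element of $S$ to act identically on $B^\omega$, you need $q^n$ to be independent of $n$; that is, $q$ must be an idempotent. The paper fixes distinguished idempotents $e\in S$ and $f\in T$ and routes all ``post-recording'' transitions to one of them; it explicitly notes that the idempotency of $e$ and $f$ is critical for showing $\langle Q_1\rangle \cong S$ and $\langle Q_2\rangle \cong T$. This is also why the generalisation to infinite factors (Theorem~\ref{thm:freeprodidpt}) requires each factor to contain an idempotent. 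The finiteness hypothesis in Theorem~\ref{thm:finfree} is used for exactly one thing: every finite semigroup contains an idempotent. Your plan never invokes finiteness, which signals that this mechanism is absent; ``passing unchanged'' and adding markers to the alphabet will not by themselves force the $S$- and $T$-relations to hold. Once the idempotent sink states are in place, the rest of your outline---faithfulness via the action on a single witness sequence, then checking that nothing beyond the relations of $S$ and $T$ collapses---is precisely how the paper proceeds.
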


\begin{proof}
Let $e$ and $f$ be distinguished idempotents of $S$ and $T$ respectively.
Let $\A = (Q,B,\delta)$ with $Q = Q_1\cup Q_2$, where $Q_1$ is a copy of $S$
and $Q_2$ is a copy of $T$. Define an alphabet
\[ B = \left\{ \domino{}{}, \domino{s}{}, \domino{s}{t}, \domino{s}{t}^\circ,
\tablet{}{}, \tablet{t}{}, \tablet{t}{s}, \tablet{t}{s}^\circ \mid s\in S, t\in T\right\} \]
and let $\delta$ be the transformation of $Q\times B$ given by the following transition table.
\[
{\renewcommand\arraystretch{1.3}
\begin{array}{|l|c|c|}
\hline
 & s & t\\
\hline
\domino{}{}   & (f,\domino{s}{})            &   (f,\domino{}{})\\
\domino{a}{}  & (f,\domino{as}{})          &    (f,\domino{a}{t})\\
\domino{a}{b} & (s,\domino{a}{b}^\circ)    &    (f,\domino{a}{bt})\\
\domino{a}{b}^\circ & (s,\domino{a}{b}^\circ)  &   (t,\domino{a}{b}^\circ)\\
\tablet{}{}  & (e,\tablet{}{})           &    (e,\tablet{t}{})\\
\tablet{b}{} & (e,\tablet{b}{s})          &    (e,\tablet{bt}{})\\
\tablet{b}{a} & (e,\tablet{b}{as}) &    (t,\tablet{b}{a}^\circ)\\
\tablet{b}{a}^\circ & (s,\tablet{b}{a}^\circ) &   (t,\tablet{b}{a}^\circ)\\[1mm]
\hline
\end{array}}
\]
for $s\in Q_1$, $t\in Q_2$, $a\in S$ and $b\in T$,
where $as$ and $bt$ denote elements of $S$ and $T$ respectively rather than two-letter words.

We will refer to $\domi$-symbols and $\tabl$-symbols, meaning all symbols having those shapes.
Actions on strings of $\domi$-symbols will help us distinguish words beginning with an element
of $S$, while actions on strings of $\tabl$-symbols help us distinguish words begining with
and element of $T$.
We call a symbol \emph{full} if it has entries in both boxes, \emph{open} if not, and \emph{marked}
if it has the $^\circ$ superscript.
Notice that all states `ignore' marked symbols: that is, if $x^\circ$ is a marked symbol,
then $(q,x)\delta = (q,x)$ for all $q\in Q$.

We begin by showing that $\A$ defines actions of $S$ and $T$, and hence of $S\star T$, on $B^*$.
Firstly, we consider only the states $e$ and $f$, whose actions are illustrated in Figure~\ref{fig:eandf}.
These states are of particular significance since, as can be seen from the definition of $\delta$,
all transitions lead either back to the state they started from or to one of $e$ or $f$.

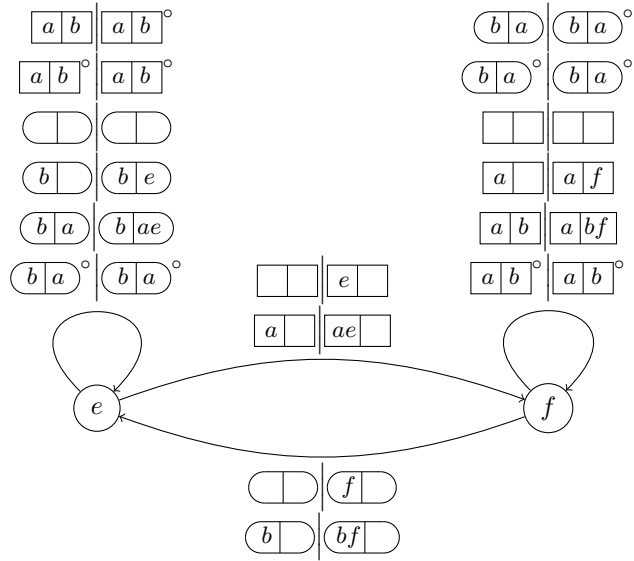
\begin{figure}[t]
\centering
\begin{tikzpicture}[x=60mm,y=10mm,every state/.style={inner sep=1mm,minimum size=1.75em}]
%
\node[state] (e) at (0,0) {$e$};
\node[state] (f) at (1,0) {$f$};
\draw[->] (e) edge[bend left=20] (f);
\draw[->] (f) edge[bend left=20] (e);
\draw[->] (e) edge[loop above,min distance=20mm,out=135,in=45] (e);
\draw[->] (f) edge[loop above,min distance=20mm,out=135,in=45] (f);
\node[anchor=south,align=center] at (.5,.6) {$\domino{}{}\Big|\domino{e}{}$\\$\domino{a}{}\Big|\domino{ae}{}$};
\node[anchor=north,align=center] at (.5,-.6) {$\tablet{}{}\Big|\tablet{f}{}$\\$\tablet{b}{}\Big|\tablet{bf}{}$};
\node[anchor=south,align=center] at (0,1.3) {$\phantom{{}^\circ}\domino{a}{b}\Big|\domino{a}{b}^\circ$\\
$\domino{a}{b}^\circ\Big|\domino{a}{b}^\circ$\\$\tablet{}{}\Big|\tablet{}{}$\\
$\tablet{b}{}\Big|\tablet{b}{e}$\\$\tablet{b}{a}\Big|\tablet{b}{ae}$\\$\tablet{b}{a}^\circ\Big|\tablet{b}{a}^\circ$};
\node[anchor=south,align=center] at (1,1.3) {$\phantom{{}^\circ}\tablet{b}{a}\Big|\tablet{b}{a}^\circ$\\
$\tablet{b}{a}^\circ\Big|\tablet{b}{a}^\circ$\\
$\domino{}{}\Big|\domino{}{}$\\$\domino{a}{}\Big|\domino{a}{f}$\\
$\domino{a}{b}\Big|\domino{a}{bf}$\\$\domino{a}{b}^\circ\Big|\domino{a}{b}^\circ$};
\end{tikzpicture}
\caption{Actions of $e$ and $f$.}
\label{fig:eandf}
\end{figure}

The state $e$ has no effect on marked symbols or the symbol $\tabl$, marks any full $\domi$-symbol,
and multiplies the second entry of non-empty $\tabl$-symbols by $e$ (or inserts $e$ if
it is blank) -- returning to state $e$ on all of these actions --
while on open $\domi$-symbols it multiplies the first entry
by $e$ (or inserts $e$ if it is blank) and moves to $f$.  The action of $f$ can be described by
switching the roles of $e$ and $f$ and of $\domi$-symbols and $\tabl$-symbols in the preceding sentence.

Let $B_e = B\setminus \{\domino{}{},\domino{a}{}: a \in S\}$ and
$B_f = B\setminus \{\tablet{}{},\tablet{b}{}: b \in T\}$.
The above discussion and the fact that $e$ and $f$ are idempotents in $S$ and $T$ respectively
implies that $e$ and $f$ act as idempotents on $B_e^*$ and $B_f^*$ respectively.
To see how $e$ acts on a string in $B^*$, the important symbols to take note of are the
$\domi$-symbol with empty second component, then the next $\tabl$-symbol with empty second
component, and so on alternatingly, since these are the symbols that will cause the automaton to change state.
So we write each string as a prefix of some alternating product of strings in
$B_e$ and $B_f$, distinguishing the important symbols, as follows (where $a_i$ or $b_i$ may denote empty space):
\[\alpha_1 \domino{a_1}{} \beta_1 \tablet{b_1}{} \ldots \alpha_i \domino{a_i}{} \beta_i \tablet{b_i}{}\ldots \in B^\omega,\]
where ${\alpha_i\in B_e^*}$ and ${\beta_i\in B_f^*}$.  Then
\begin{align*}
&\alpha_1 \domino{a_1}{} \beta_1 \tablet{b_1}{} \ldots \alpha_i \domino{a_i}{} \beta_i \tablet{b_i}{}\ldots \cdot e\\
=& (\alpha_1\cdot e) \, \domino{a_1 e}{} \, (\beta_1\cdot f) \, \tablet{b_1 f}{} \ldots
(\alpha_i\cdot e) \, \domino{a_i e}{} \, (\beta_i\cdot f) \, \tablet{b_i f}{}\ldots.
\end{align*}
(If, for example, $a_i$ denotes an empty space, then $a_i e = e$.)
Acting on the resulting string by $e$ again has the result of replacing each $e$ and $f$ by $e^2$ and $f^2$
respectively,
but since we already know that $e$ acts idempotently on $S$ and $B_e^*$, while $f$ acts idempotently on
$T$ and $B_f^*$, this makes no change.  Hence $e^2 = e$ in $\Sigma(\A)$.  Similarly, to show that $f^2=f$
in $\Sigma(\A)$, we would express strings in $B^*$ in the form $\beta_1 \tablet{b_1}{} \alpha_1 \domino{a_1}{} \ldots$.

We can now describe the action of $Q_1$ on $B^*$.
Each state in $Q_1$ recurses to itself on marked symbols (which it leaves unchanged)
and on full $\domi$-symbols (which it marks); to $e$ on unmarked $\tabl$-symbols;
and to $f$ on open $\domi$-symbols.
Let $C$ be the set of marked symbols and full $\domi$-symbols in $B$, and for
$\alpha\in C^*$, let $\alpha^\circ$ denote the word obtained from $\alpha$ by marking all unmarked symbols.
We can express any string in $B^*$ in the form $\alpha \beta \gamma$, where
$\alpha\in C^*$, $\beta\in B\setminus C$, $\gamma\in B^*$.  Let $s_1,\ldots, s_n\in S$.
Since the type ($\domi$ or $\tabl$) of the symbol $\beta$ is not changed by the action of any state,
and also $\beta\cdot s\notin C$ for any $s\in S$, we have for some $\epsilon\in \{e,f\}$
\begin{align*}
\alpha\beta\gamma \cdot s_1\ldots s_k &= [\alpha^\circ (\beta\cdot s_1) (\gamma\cdot \epsilon)] \cdot s_2\ldots s_n\\
&= \ldots = \alpha^\circ (\beta\cdot s_1\ldots s_n) (\gamma\cdot \epsilon^n)\\
&= \alpha^\circ (\beta\cdot s_1\ldots s_n) (\gamma\cdot \epsilon),
\end{align*}
Thus the action of $\langle Q_1\rangle$ on $B^*$ depends only on its action on $B\setminus C$.
(Note that the idempotency of $e$ and $f$ is critical in establishing this.)
Let $w = s_1\ldots s_n\in Q_1^+$ and let $s_w$ be the element of $S$ represented by $w$. Then
\begin{align*}
\domino{}{}\cdot w &= \domino{s_w}{} \\
\domino{a}{}\cdot w &= \domino{as_w}{} \\
\tablet{}{}\cdot w &= \tablet{}{}\\
\tablet{b}{}\cdot w &= \tablet{b}{s_w}\\
\tablet{b}{a}\cdot w &= \tablet{b}{as_w}.
\end{align*}
This shows that the action of $w$ on $B^*$ depends only on $s_w\in S$,
so that $\langle Q_1\rangle$ must be isomorphic to some quotient of $S$.

By symmetry of the construction, we also find that $\langle Q_2 \rangle$ is isomorphic
to some quotient of $T$, and so $\A$ defines an action of $S\star T$ on $B^*$.

It remains to prove that this action is faithful. We have already seen that the actions
of words in $Q_1^+$ and $Q_2^+$ depend only on the elements of $S$ and $T$ respectively
that they represent, so it suffices to consider the action of reduced words.
The idea of this automaton is that the action on the string $\domi^\omega$ can be used to recover
any reduced word in $S\star T$ starting with an element of $S$, while $\tabl^\omega$ is used to recover
reduced words starting with elements of $T$.  Given a word $s_1 t_1 \ldots s_k t_k$ with $s_i\in S$, $t_i\in T$, we have
\begin{align*}
\domino{}{}^\omega \cdot s_1 t_1 \ldots s_k t_k &= \domino{s_1}{} \, \domino{}{}^\omega \cdot t_1 s_2 \ldots s_k t_k\\
&= \domino{s_1}{t_1} \, \domino{}{}^\omega \cdot s_2 t_2 \ldots s_k t_k\\
&= \domino{s_1}{t_1}^\circ \, \domino{s_2}{t_2} \, \domino{}{}^\omega \cdot s_3 t_3 \ldots s_k t_k\\
&= \domino{s_1}{t_1}^\circ \, \domino{s_2}{t_2}^\circ \ldots \domino{s_{k-1}}{t_{k-1}}^\circ \domino{s_k}{t_k}.
\end{align*}
If the final $t_k$ is not present, the resulting final symbol will instead be $\domino{s_k}{}$.
Thus we can read off any reduced word $w$ starting with an element from $S$ from the string
$\domi^\omega \cdot w$.  Similarly, if $w$ is a reduced word starting with an element from $T$,
we can read it off from the string $\tabl^\omega \cdot w$.
Moreover,
\begin{align*}
\domino{}{}^\omega\cdot t_1 s_1 \ldots t_k s_k &=
\domino{s_1}{t_2}^\circ \ldots \domino{s_{k-1}}{t_k}^\circ \, \domino{s_k}{},\\
\tablet{}{}^\omega \cdot s_1 t_1 \ldots s_k t_k &=
\tablet{t_1}{s_1}^\circ \ldots \tablet{t_{k-1}}{s_k}^\circ \, \tablet{t_k}{}.
\end{align*}
Hence pairs of distinct elements of $S\star T$ can be distinguished by their actions on
one of $\domi^\omega$ or $\tabl^\omega$, and so $\Sigma(\A)\cong S\star T$.
\end{proof}

We can in fact considerably generalise the construction in the preceding proof: the
important point is the existence of idempotents in the factor semigroups.
The following theorem generalises \cite[Theorem~2]{bc_freeprodauto},
which says that the free product of automaton semigroups $S$ and $T$
is an automaton semigroup if $S$ and $T$ each contain a left identity.

\begin{theorem}\label{thm:freeprodidpt}
Let $S$ and $T$ be automaton semigroups each containing at least one idempotent.
Then $S\star T$ is an automaton semigroup.
\end{theorem}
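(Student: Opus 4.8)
The plan is to reuse the combinatorial skeleton of the proof of Theorem~\ref{thm:finfree} --- the domino/tablet symbols with their empty, full, and marked variants --- but to replace the slots that held \emph{elements} of $S$ and $T$ (possible only because those semigroups were finite) by slots holding \emph{symbols} from the generating automata of $S$ and $T$, over which the sub-automata are run. Fix generating automata $\fsa{A}_S = (Q_S,B_S,\delta_S)$ and $\fsa{A}_T = (Q_T,B_T,\delta_T)$ for $S$ and $T$, and let $e\in S$, $f\in T$ be the given idempotents. First I would record the reduction that we may assume $e$ is a state of $\fsa{A}_S$ and $f$ a state of $\fsa{A}_T$: if $e$ is represented by a word $w\in Q_S^+$ of length $n$, then every iterated section $e|_\alpha$ is again a word of length exactly $n$ over $Q_S$, so the set of iterated sections of $e$ is finite; adjoining a new state for $e$ and for each such section (each acting as the corresponding word) leaves $\Sigma(\fsa{A}_S)=S$ unchanged and makes $e$ a state.

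Next I would build the combined automaton $\fsa{A}$ with state set $Q = Q_S \sqcup Q_T \sqcup \{e,f\}$ and an alphabet $B$ of domino- and tablet-shaped symbols whose filled slots carry letters of $B_S$ (in the domino symbols) and $B_T$ (in the tablet symbols), with empty, full, and marked variants exactly as before. The transition table would mirror that of Theorem~\ref{thm:finfree} under a single translation: wherever the finite construction multiplied a slot-entry $a$ by the current state $s$ to form $as\in S$, the general construction instead applies one sub-automaton step, replacing the slot-letter $\beta\in B_S$ by $\beta\cdot s$ and moving from $s$ to its section $s\pi_\beta$, thereby simulating the action of $\fsa{A}_S$ on the $B_S$-coordinate distributed across the domino symbols (symmetrically for $Q_T$ on tablet symbols). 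The roles of $e$ and $f$ are kept verbatim: reading an open-domino or a tablet symbol routes a $Q_S$-state into the absorbing pair $\{e,f\}$, these two idempotent states transition only between themselves and act idempotently on the complementary sub-alphabet, and every state ignores marked symbols.

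With the construction in place, the verification that $\fsa{A}$ defines an action of $S\star T$ proceeds as in the finite case. Decomposing an arbitrary string into alternating blocks over the two sub-alphabets, together with the idempotency of $e$ and $f$, shows that the tail of any word is absorbed by the idempotents and that the action of a word $w\in Q_S^+$ on $B^*$ transforms the $B_S$-coordinates exactly by the $\fsa{A}_S$-action of the element $s_w\in S$ it represents; hence $\langle Q_S\rangle$ is a quotient of $S$, symmetrically $\langle Q_T\rangle$ a quotient of $T$, and $\fsa{A}$ defines an action of $S\star T$. The idempotency of $e$ and $f$ is precisely what makes this tail-stabilisation work, which is where the hypothesis on $S$ and $T$ is used.

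The hard part will be faithfulness. In the finite case each factor of a reduced word was literally written into a single symbol, so feeding $\domi^\omega$ (resp.\ $\tabl^\omega$) through the word spelled the word out directly. Here a factor $s_i\in S$ cannot be stored in one symbol and is only recoverable from its simulated action on $B_S^*$, so I would argue as follows: feeding the periodic strings $\domi^\omega$ and $\tabl^\omega$ still segments the output into one block per factor via the same marking mechanism, but now the block associated to $s_i$ records the action of $\fsa{A}_S$ on a portion of $B_S^*$, and by faithfulness of $\fsa{A}_S$ this action, examined over sufficiently deep levels of the tree, determines $s_i$ (symmetrically for the $T$-factors on $\tabl^\omega$). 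The main obstacle is exactly to arrange the symbols and transitions so that consecutive factors are simulated on disjoint, individually recoverable regions of the tree and so that the idempotent stabilisation of one block does not corrupt the recorded data of its neighbours; once this separation is guaranteed, distinct reduced words act differently and $\Sigma(\fsa{A})\cong S\star T$.
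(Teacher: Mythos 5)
Your overall strategy --- store automaton letters rather than semigroup elements in the symbols, simulate $\fsa{A}_S$ and $\fsa{A}_T$ coordinatewise, keep $e$ and $f$ as absorbing states to stabilise tails, and distinguish elements by their actions on designated periodic inputs --- is exactly the strategy of the paper, which proves a more general statement (Theorem~\ref{thm:freeprodfull}, with a hypothesis on powers of $e$ and $f$ that idempotents satisfy) and derives the present theorem from it. Your preliminary reduction making $e$ and $f$ into states (via the finitely many sections of a representing word, all of the same length) is sound, and fills in a step the paper only asserts with ``we may assume''.

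However, there is a genuine gap, and it is precisely the point you flag and then defer in your final paragraph: the device that confines each syllable's action to a bounded, individually recoverable region. Your plan to keep ``empty, full, and marked variants exactly as before'' does not translate to the letter-based setting. In the finite case a syllable was recorded by \emph{writing} it into an empty slot, the resulting full symbol triggered the marking, and --- crucially --- the writing state immediately transitioned to $f$, so each syllable touched exactly one symbol. With letters in the slots there is nothing to write and no ``becoming full'' event to trigger a state change: a state $s\in Q_S$ acting on a string of unmarked letter-dominoes just keeps applying its sections and marking, so the whole first syllable $u_1$ transforms (and $S$-marks) the \emph{entire} input $\domi^\omega$, the syllable $v_1$ then $T$-marks all of it, and the first state of $u_2$ circles everything --- after which nothing remains on which $u_2,v_2,\ldots$ can record their actions. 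So your segmentation claim fails for the construction as you describe it. The paper's fix is an extra ingredient absent from your sketch: dominoes carry a \emph{pair} $(a,b)\in A\times B$ with a three-stage marking ($S$-marked, $T$-marked, circled), and the alphabet contains two families of gate symbols $\$,\#$, each cycling through closed, half-open, open and circled states. A closed gate is what \emph{stops} a syllable: reading $\ol{\$}$ sends a $Q_1$-state to $f$; the half-open/open distinction records whether the $T$-half of the current syllable pair has yet acted; and the next $Q_1$-syllable circles the open gate and begins on the next block. This also exposes a second, essential use of idempotency that your ``tail-stabilisation'' remark only half captures: a word $w\in Q_1^+$ of length $n$ acts on the blocks beyond the first gate by $e^n$ or $f^n$ (according to the preceding gate), and well-definedness of the action on all of $C^*$ --- not just on the ideal strings used for faithfulness --- requires these powers to depend only on the element represented by $w$; this is trivial for idempotents and is exactly the hypothesis of Theorem~\ref{thm:freeprodfull}. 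Until you supply a stopping/segmentation mechanism of this kind and verify well-definedness on arbitrary strings, the faithfulness argument, which you correctly identify as the hard part, cannot be completed.
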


This is immediate from the following more technical result:

\begin{theorem}\label{thm:freeprodfull}
Let $S$ and $T$ be automaton semigroups and suppose that there exist
$e\in S$, $f\in T$ such that
\[
  \bigl(u =_S u' \lor  u =_T u'\bigr) \implies \bigl(e^{|u|} = e^{|u'|} \land f^{|u|} = f^{|u'|}\bigr).
\]
Then $S\star T$ is an automaton semigroup.
\end{theorem}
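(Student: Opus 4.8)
The plan is to generalise the automaton constructed in the proof of Theorem~\ref{thm:finfree}, replacing the finite factors $S$, $T$ (which were copied directly onto the state set) with the automata $\A_1=(Q_1,B_1,\delta_1)$ and $\A_2=(Q_2,B_2,\delta_2)$ generating $S$ and $T$. The new state set is $Q=Q_1\cup Q_2$, and the alphabet $B$ consists of ``domino'' and ``tablet'' symbols each of which carries, in its two boxes, a letter of $B_1\cup B_2\cup\{\text{blank}\}$ in one box and a letter of the other alphabet (or blank) in the other, together with the marked variants. The chosen idempotent-like elements $e\in S$, $f\in T$ are realised by fixed words $e\in Q_1^+$, $f\in Q_2^+$; the hypothesis that $u=_S u'$ (or $u=_T u'$) forces $e^{|u|}=e^{|u'|}$ and $f^{|u|}=f^{|u'|}$ is exactly what is needed so that ``inserting $\epsilon^n$'' collapses to ``inserting $\epsilon$'' regardless of how the word length $n$ is presented. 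The transition table mimics the one in Theorem~\ref{thm:finfree} but now acts \emph{letterwise on the $\A_i$-components}: reading symbol $s\in Q_1$ at a domino, multiply the $S$-box of the domino by $s$ (i.e.\ apply the transition of $\delta_1$ to the stored $B_1$-letter) and recurse to the appropriate state ($s\pi$ on marked/full dominoes, $e$ on tablets, $f$ on open dominoes), and dually for $Q_2$ and tablets.

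The steps, in order, are as follows. First I would fix the automata, fix representative words $e\in Q_1^+$ and $f\in Q_2^+$, and define $B$ and $\delta$ precisely, checking that $\delta$ is a well-defined transformation of $Q\times B$ (this is routine bookkeeping). Second, I would analyse the action of the words $e$ and $f$: the key point is to show $e$ ``acts idempotently'' on the sub-alphabet $B_e$ of symbols that do not send a $Q_1$-state to another state, using the hypothesis to replace $\epsilon^{|e|}$ by $\epsilon$ and the analogue for the stored $\A_i$-letters; concretely, $e^2=e$ in $\Sigma(\A)$, mirroring the $e^2=e$ computation in the proof above, but now the equality ``$\alpha\cdot e^2 = \alpha\cdot e$'' for $\alpha\in B_e^*$ follows because the stored letters get acted on by $e^{|e|}$ versus $e$, which by hypothesis (applied with $u=e^{|e|}$, $u'=e$, noting $e^{|e|}=_S e$) give the same endomorphism. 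Third, I would show that the action of any $w\in Q_1^+$ on $B^*$ depends only on the element $s_w\in S$ it represents: decompose an arbitrary string as $\alpha\beta\gamma$ with $\alpha$ a block of marked/full dominoes, $\beta\notin C$ the first ``active'' symbol, and observe $\alpha\cdot w=\alpha^\circ$, $\gamma\cdot w=\gamma\cdot\epsilon^{|w|}=\gamma\cdot\epsilon$ (again by the hypothesis), and $\beta\cdot w$ is obtained by applying $w$ letterwise to the stored $B_1$-letter of $\beta$ — so the action is a function of $s_w$ alone. This establishes that $\langle Q_1\rangle$ is a quotient of $S$ and, symmetrically, $\langle Q_2\rangle$ a quotient of $T$, hence $\A$ defines an action of $S\star T$.

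Finally, faithfulness: having reduced to reduced words, I would run exactly the computation from the proof of Theorem~\ref{thm:finfree} showing that $\domi^\omega\cdot w$ reconstructs any reduced word $w$ beginning with an element of $S$ and $\tabl^\omega\cdot w$ reconstructs those beginning with an element of $T$ — with the proviso that now the boxes of the output symbols store, instead of elements of $S$ and $T$ directly, the images of $\domi$'s (resp.\ $\tabl$'s) blank component under the subwords of $w$, and one recovers the element of $S$ or $T$ represented by each syllable from the corresponding component of $\Sigma(\A_1)$ or $\Sigma(\A_2)$ acting on $B_1^\omega$ or $B_2^\omega$ — so distinct elements of $S\star T$ have distinct action on one of $\domi^\omega$, $\tabl^\omega$. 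The main obstacle I anticipate is \emph{not} the faithfulness argument (which is essentially combinatorial and transfers verbatim) but the second step: verifying that $e$ and $f$ genuinely behave as idempotents at the level of $\Sigma(\A)$ when they are words rather than single states. One must be careful that the recursion $(q,x)\delta$ for $q$ a letter of the word $e$ does not prematurely leave $Q_1$ — which is why the transition table is arranged so that $Q_1$-states recurse within $Q_1$ on the ``inert'' symbols and only jump to $e$ or $f$ on the designated triggering symbols — and that the hypothesis is invoked in precisely the form $e^{|u|}=e^{|u'|}$, $f^{|u|}=f^{|u'|}$ whenever two words of differing length but equal $S$- or $T$-value are substituted into a box or into the ``tail'' $\gamma$. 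Getting this uniformity right across all four symbol shapes and their marked variants is where the care is needed; once it is in place, everything else is a transcription of the finite case.
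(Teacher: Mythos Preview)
Your faithfulness step has a genuine gap, and it is not a bookkeeping issue but a missing idea. In Theorem~\ref{thm:finfree} the factors are finite, so a single domino box can literally store an element of $S$ or of $T$; the computation $\domi^\omega\cdot w$ then spells out the syllables of $w$ one domino at a time. In your generalisation each box can only hold a single letter of $B_1$ or $B_2$, and your transition rule (recurse to $f$ on an open domino) means that when a syllable $u_1\in Q_1^+$ acts on $\domi^\omega$, it writes exactly one $B_1$-letter into the first domino and then hands the rest of the string to $f^{|u_1|}$. So the only information recorded about $u_1$ is its action on a single letter of $B_1$, which cannot distinguish elements of an infinite $S$. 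Your claim that ``one recovers the element of $S$ or $T$ represented by each syllable from the corresponding component'' is therefore unjustified: from one letter you do not recover anything close to the action on $B_1^\omega$.

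The paper's proof solves exactly this by introducing \emph{gates} $\$$ and $\#$ (in several opening states) interleaved with the dominoes. A $Q_1$-state reading an unmarked domino $\domino{a}{b}$ acts on the $A$-component via $\delta_1$ and recurses \emph{within $Q_1$} to the next domino, so a block of $n$ dominoes before a gate records the action of the syllable on a length-$n$ word in $A^*$; only upon hitting a gate does the automaton jump to $e$ or $f$. Distinct elements of $S\star T$ are then separated by choosing the $i$-th block long enough to witness $u_i\neq_S u_i'$ (or the $T$-analogue). This mechanism has no counterpart in your construction. A secondary point: you assert $e^2=e$ in $\Sigma(\A)$, but $e$ need not be an idempotent under the hypothesis; what is actually available (and what the paper uses) is only the consistency $e^{|u|}=e^{|u'|}$ whenever $u=_S u'$ or $u=_T u'$, so the tail is acted on by $e^{|w|}$, not by $e$, and this still depends only on the element represented by $w$.
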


\begin{proof}
Let $e$ and $f$ be distinguished elements of $S$ and $T$ respectively
satisfying the hypothesis of the theorem.
(For example, $e$ and $f$ might be idempotents.)
Let $\A_1 = (Q_1,A,\delta_1)$ and $\A_2 = (Q_2,B,\delta_2)$ be automata
for $S$ and $T$ respectively.  We may assume that $e\in Q_1$ and $f\in Q_2$.
Let $X = \{ \domino{a}{b}, \tablet{b}{a} \mid a\in A, b\in B \}$
and $Y = \{\$,\#\}$.
We shall call the symbols in $X$ \emph{dominoes} and the symbols in $Y$ \emph{gates}.
We construct an automaton $\A = (Q,C,\delta)$ with $Q = Q_1\cup Q_2$,
\[ C = \{x, x^S, x^T, x^\circ,
y, \hat{y}, \ol{y}
\mid x\in X, y\in Y\} \]
and $\delta$ the transformation of $Q\times C$ defined as follows.
For $s\in Q_1$, $t\in Q_2$, $a\in A$, $b\in B$
suppose that $(s,a)\delta_1 = (s_0,a_0)$ and $(t,b)\delta_2 = (t_0,b_0)$.  Then
the action of $Q$ on $\domi$-symbols and $\$$-gates is given by
\[
{\renewcommand\arraystretch{1.3}
\begin{array}{|c|c|c|}
\hline
 & s & t\\
\hline\
\domino{a}{b}  & (s_0, \domino{a_0}{b}^S)    &      (t,\domino{a}{b})\\
\domino{a}{b}^S & (s_0, \domino{a_0}{b}^S)   &     (t_0, \domino{a}{b_0}^T)\\
\domino{a}{b}^T & (s, \domino{a}{b}^\circ)  &    (t_0, \domino{a}{b_0}^T)\\
\domino{a}{b}^\circ & (s,\domino{a}{b}^\circ)  &  (t,\domino{a}{b}^\circ)\\
\ol{\$} & (f,\hat{\$})                      &   (f,\ol{\$})\\
\hat{\$} & (f,\hat{\$})                     &   (f,\$)\\
\$ & (s,\$^\circ)                           &   (f,\$)\\
\$^\circ & (s,\$^\circ)                     &   (t,\$^\circ)\\[1mm]
\hline
\end{array}}
\]
The action of $Q$ on the remainder of $C$ ($\tabl$-symbols and $\#$-gates) is given by
replacing each $\domino{i}{j}$ in the above table by $\tablet{j}{i}$ and swapping the
corresponding symbols in the tuples $(S,s,s_0,f,\$)$ and $(T,t,t_0,e,\#)$.

For $x\in X$ and $y\in Y$, we call $x$ \emph{unmarked}, $x^S$ \emph{$S$-marked},
$x^T$ \emph{$T$-marked}, $x^\circ$ and $y^\circ$ \emph{circled},
$y$ \emph{open}, $\hat{y}$ \emph{half-open} and $\ol{y}$ \emph{closed}.

This construction is inspired by the construction in Theorem~\ref{thm:finfree}.
Since single symbols are no longer sufficient for distinguishing elements of $S$
and of $T$, we instead use strings of several $\domi$- or $\tabl$-symbols,
separated by either $\$$-gates or $\#$-gates.

We first describe the action of a word in $Q_1Q^+$ on a string consisting only
of unmarked $\domi$-symbols and closed $\$$-gates.  Let $w = u_1v_1\ldots u_kv_k$
with $u_i\in Q_1^+$ and $v_i\in Q_2^+$ and let
$\alpha = \alpha_1 \ol{\$} \alpha_2 \ldots \ol{\$} \alpha_k$ with each $\alpha_i$
consisting only of unmarked $\domi$-symbols (note that $\alpha_i$ may be empty).
Then $u_1$ acts on $\alpha$ by acting on the first entries of $\alpha_1$ just
as in $\A_1$, $S$-marking the resulting $\domi$-symbols, and half-opens the first
$\$$-gate, after which the automaton moves to state $f$ and thus leaves the rest of the string unchanged.
Next, $v_1$ acts on $\alpha\cdot u_1$ by acting on the second entries of
$\alpha_1\cdot u_1$ just as in $\A_2$, $T$-marking the resulting $\domi$-symbols,
opening the first $\$$-gate, and leaving the rest of the string unchanged.
Now $\alpha\cdot u_1v_1$ begins with a string of $T$-marked $\domi$-symbols,
followed by an open $\$$-gate.  The first state in $u_2$ circles the
initial string of $\domi$-symbols and the first $\$$-gate, all the while not
changing state.
By induction, we have
\[
\alpha\cdot w = (\alpha_1\cdot u_1v_1)^\circ \$^\circ (\alpha_2\cdot u_2v_2)^\circ \$^\circ \ldots \$^\circ (\alpha_k\cdot u_kv_k).
\]
(All symbols up until the last $\$$-gate are circled.)
Thus if $w' = u_1' v_1' \ldots u_k' v_k'$ is another word with $u_i'\in Q_1^+$
and $v_i'\in Q_2^+$ and some $u_i'\neq_S u_i$, we can distinguish $w$ and $w'$
as follows.  Let $\gamma = \ol{\$}^{i-1}\beta$, where $\beta$ is some string
of $\domi$-symbols such that reading off the first entries of $\beta$ gives a
word which $u_i$ and $u_i'$ act differently on.  Then
\begin{align*}
\gamma\cdot w = (\$^\circ)^{i-1} (\beta\cdot u_iv_i)^{(\circ)} &\neq (\$^\circ)^{i-1} (\beta\cdot u'_iv'_i)^{(\circ)} = \gamma\cdot w',
\end{align*}
where the parentheses around the superscript $\circ$ indicate that the $\circ$ is present if and only if $i\neq k$.
If instead some $v_i\neq_T v_i'$, then the same idea using second entries
instead of first entries for $\beta$ works.
Words in $Q_1Q^+$ representing elements of $S\star T$ of different reduced lengths
can be distinguished by their actions on the string $\ol{\$}^\omega$.
(The \emph{reduced length} of an element $s\in S\star T$ is the length of
an alternating product of elements of $S$ and $T$ representing $s$.)

Words in $Q_2Q^+$ have an analogous action to the one described above on
strings consisting only of unmarked $\tabl$-symbols and closed $\#$-gates,
and can thus be distinguished similarly.
Two words not starting with symbols from the same $Q_i$ can be distinguished
their actions on either $\ol{\$}^\omega$ or $\ol{\#}^\omega$ (usually both).

It remains to show that $\A$ defines an action of $S\star T$ on $C^*$.
For this, it suffices to show that the action of $Q_1^+$ gives an action of
$S$, since it will follow by symmetry of the construction that the action of
$Q_2^+$ gives an action of $T$.

Let $w\in Q_1^+$ and $\alpha\in C^*$.
For clarity, we explain the action of $w$ on $\alpha$ by a series of observations.
\begin{enumerate}
\item
When acting on $C^*$ by $Q_1^+$ , certain symbols are `uninteresting', in the sense
that the same thing happens to them when acted on by any word in $Q_1^+$, and they
also do not affect what happens to the rest of the string containing them.
All circled symbols are uninteresting, as are $\$, \hat{\$}, \#$,
unmarked $\tabl$-symbols and $T$-marked $\domi$-symbols.
We may thus assume that $\alpha$ contains none of these symbols;
that is, that \[\alpha\in \{\ol{\$}, \ol{\#}, \hat{\#}, \domino{a}{b}, \domino{a}{b}^S,
\tablet{b}{a}^T, \tablet{b}{a}^S \mid a\in A, b\in B\}^*.\]

\item
Furthermore, under actions of $Q_1^+$, the following pairs of symbols are essentially
the same: $(\ol{\#}, \hat{\#})$, $(\domino{a}{b}, \domino{a}{b}^S)$ and
$(\tablet{b}{a}^T, \tablet{b}{a}^S)$.  This is because the action of $Q_1$ on the
two symbols in each pair is identical.  In each case, the output from both symbols
is a symbol of the second type, and the symbols of the first type do not occur in
$C^*\cdot Q_1^+$.  We may thus assume that
\[\alpha\in \{\ol{\$}, \hat{\#}, \domino{a}{b}^S, \tablet{b}{a}^S \mid a\in A, b\in B\}^*.\]

\item
For $\beta$ consisting only of $S$-marked dominoes, then let $\beta_A\in A^*$ be the word
obtained from $\beta$ by reading off the symbols from $A$ in each domino (which will
be the first entry for $\domi$-symbols and the second entry for $\tabl$-symbols).
Then $(\beta\cdot w)_A = \beta_A\cdot w$, where the action of $w$ is in $\A$ on the
left-hand side and in $\A_1$ on the right-hand side.
If we define $\beta_B$ similarly, then $(\beta\cdot w)_B = \beta_B$.
Hence $Q_1^+$ defines an action of $S$ on $S$-marked dominoes.\\
Moreover, note that for $v\in Q_2^+$ we have $(\beta\cdot v)_A = \beta_A$
and $(\beta\cdot v)_B = \beta_B\cdot v$.

\item
In general, $\alpha$ can be assumed to be a prefix of some
$\gamma = \alpha_1 y_1 \alpha_2 y_2 \ldots \alpha_k y_k$, where each $\alpha_i$
is a string of $S$-marked dominoes and each $y_i$ is a gate.
If $w$ has length $n$, then
\[ \gamma\cdot w = (\alpha_1\cdot w) y_1 (\alpha_2\cdot g_2^n) y_2 \ldots (\alpha_k\cdot g_k^n),\]
where $g_i$ is $e$ if $y_{i-1} = \hat{\#}$ and $f$ if $y_{i-1} = \ol{\$}$.
By (iii) and the hypothesis on $e$ and $f$, the string $\alpha_1\cdot w$ and
each $\alpha_i\cdot g_i^n$ depend only on the element of $S$ represented by $w$.
Hence we have $\alpha\cdot w = \alpha\cdot w'$ whenever $w=_S w'$, for $w, w'\in Q_1^+$ and
$\alpha\in C^*$.
\end{enumerate}

Thus $\A$ defines a faithful action of $S\star T$ on $C^*$ and so $S\star T$ is
an automaton semigroup.
\end{proof}

Aside from $S$ and $T$ containing idempotents, another way to satisfy the hypothesis of Theorem~\ref{thm:freeprodfull}
is for $S$ and $T$ to be \emph{homogeneous}, meaning that any two words representing the same element have the same
length.  (In this case $e$ and $f$ can be taken to be arbitrary elements of $S$ and $T$ respectively.)  Free semigroups
and free commutative semigroups of rank at least $2$ are automaton semigroups that have this property.  A more important
example is the plactic monoid (see, for example, \cite[Ch.~5]{lothaire_algebraic}), which Picantin has recently shown to
be an automaton semigroup \cite{picantin_plactic}.

The question of whether the class of automaton semigroups is closed under
taking free products remains open.
It is even possible that the condition in Theorem~\ref{thm:freeprodfull}
is necessary.  Unlike \cite[Theorem~2]{bc_freeprodauto},
Theorem~\ref{thm:freeprodfull} does account (by induction) for the free semigroups
and free monoids that can be constructed as free products of automaton
semigroups (i.e. free semigroups of rank at least $4$ and free monoids
of rank at least $2$).

\section{Wreath products}

The wreath product of two automaton semigroups is certainly not always an automaton semigroup, since it need not even be finitely generated.
One way to ensure that a wreath product $S\wr T$ is finitely generated is to require $S$ and $T$ to be monoids, with $T$ finite.
For monoids $S$ and $T$ with $T = \{t_1,\ldots,t_n\}$ finite, the \emph{wreath product} $S\wr T$ of $S$ with $T$ is a semidirect product $S^{|T|}\rtimes T$,
where $T$ acts on elements of $S^{|T|}$ by $(s_{t_1},s_{t_2},\ldots,s_{t_n})^t = (s_{t_1t}, s_{t_2t}, \ldots, s_{t_nt})$.

It turns out, contrary to \cite[Conjecture~6]{bc_freeprodauto}, that the wreath product of an automaton
monoid and a finite monoid is always an automaton monoid.

\begin{theorem}
\label{thm:wreath}
Let $S$ be an automaton monoid and $T$ a finite monoid.  Then $S\wr T$ is an automaton monoid.
\end{theorem}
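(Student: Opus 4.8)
The plan is to build an automaton $\A$ for $S \wr T$ by combining an automaton $\A_S = (Q_S, A, \delta_S)$ for $S$ with a suitable encoding of the finite monoid $T$, following the philosophy announced in the introduction: use alphabet symbols that are tuples carrying information from the base automaton together with bookkeeping data about the $T$-coordinate being processed. An element of $S \wr T$ is a pair $\bigl((s_{t_1},\dots,s_{t_n}), t\bigr)$, and multiplication twists the $S^{|T|}$-part by the $T$-action $(s_{t_i}) \mapsto (s_{t_it})$. So a generating set for $S\wr T$ can be taken to be $\{(\mathbf{1}, t) : t \in T\}$ together with $\{(\mathbf{e}_q, 1) : q \in Q_S\}$, where $\mathbf{e}_q$ has $q$ in the coordinate indexed by $1 \in T$ and the identity elsewhere. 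Thus I would set $Q = T \,\dot\cup\, (Q_S \times T)$ — a generator $(q,t)$ acting, morally, as the state $q$ of $\A_S$ placed in the $t$-indexed coordinate — and design $C$ and $\delta$ so that the action faithfully reproduces this.

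First I would fix the alphabet: take $C$ to consist of tuples $(a_{t_1}, \dots, a_{t_n}; t)$ with each $a_{t_i} \in A$ (or perhaps from $A$ with an adjoined symbol to mark "not yet touched") and $t \in T$ recording how far the word has "rotated" the coordinates, together with a small supply of marker/gate symbols analogous to the $\$$- and circled symbols of Theorem~\ref{thm:finfree}; the point of these markers is, as there, to let later letters of a word recognise which part of the input has already been consumed so that words of different reduced form can be distinguished. Then I would define $\delta$ so that: the pure-$T$ generators $t \in Q$ permute the coordinate entries of each $C$-symbol by the $T$-action and update the recorded $t$-component (these should act more or less as automorphisms of $C^*$ that leave the $A$-entries' values alone), while a generator $(q,t) \in Q$ runs $\delta_S$ on the coordinate currently indexed by the appropriate element of $T$ and marks the symbol. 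The composite action of a word in $Q^+$ on $C^*$ should then be checked, by a wreath-recursion computation using \eqref{eq:wreath}, to factor through $S \wr T$.

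The proof then splits into the two standard halves, exactly as in the proofs of Theorems~\ref{thm:finfree} and~\ref{thm:freeprodfull}. (1) \emph{$\A$ defines an action of $S\wr T$}: one shows the defining relations of $S\wr T$ (the relations of $S$ in each coordinate, commutativity of distinct coordinates, and the semidirect-product relations $t^{-1}(\cdot)t = (\cdot)^t$ rewritten multiplicatively since $T$ need not be a group — here one uses that $T$ is a \emph{monoid} so one works with the relations of $T$ directly) are satisfied by the corresponding words of $Q^+$, by tracking the effect on an arbitrary $C$-string. Since $\A_S$ defines an action of $S$, the per-coordinate relations come for free; the coordinate-permutation relations are a finite check. (2) \emph{Faithfulness}: given two elements of $S\wr T$ that differ, I would exhibit a distinguishing input string. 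If the $T$-components differ, a short string whose recorded component is tracked through will betray this. If some $S$-coordinate $s_{t_i}$ differs, I would feed in a string whose $t_i$-indexed entries spell out (a prefix of) an input on which the corresponding states of $\A_S$ act differently, using the gate/marker symbols — as in Theorem~\ref{thm:freeprodfull} — to neutralise the irrelevant coordinates and the already-rotated part of the word. Finally $S\wr T$ being a monoid and $\A$ having the right generating structure, $\Sigma(\A) \cong S\wr T$, and it is a monoid automaton.

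The main obstacle I anticipate is the faithfulness/action bookkeeping for the $T$-twisting: a word in $Q^+$ interleaves pure-$T$ letters (which rotate coordinates) with $(q,t)$-letters (which edit a particular coordinate), and one must be sure the automaton's finite memory — encoded entirely in the current state of $\A$ and in the marker data on the symbols already read — suffices to realise "edit coordinate $t_i t$ where $t$ is the product of all $T$-letters seen so far". Because $T$ is finite this product lives in a finite set, so it \emph{can} be carried in the state set $Q$ (this is really why the construction works, and why finiteness of $T$ is essential), but getting the transition table right so that the action genuinely descends to $S\wr T$ rather than some larger semigroup, and simultaneously remains faithful, is the delicate part; the marker symbols must be calibrated so that distinct reduced expressions leave distinct traces, just as the $^\circ$-marking does in Theorem~\ref{thm:finfree}.
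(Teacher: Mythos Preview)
Your core mechanism --- recording the accumulated $T$-product in the alphabet symbols so that each successive generator can read off the correct twist --- is exactly what the paper does, so in that sense you have the right idea. But the paper's execution is much simpler than your plan, and in two places you are making the problem harder than it is.

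First, the paper does not build directly from an automaton for $S$. It starts from the known automaton $\A = (Q,A,\delta)$ for the direct power $S^{|T|}$ (this exists by \cite[Proposition~5.5]{c_1auto}), takes state set $Q' = Q\times T$ and alphabet $C = A\times T$, and uses the single transition rule
\[
\bigl((s,t),(a,b)\bigr)\;\longmapsto\;\bigl((s^b\pi_a,\,1_T),\,(a\tau_{s^b},\,bt)\bigr).
\]
After one input symbol the state is always of the form $(s,1_T)$ and behaves on the $A$-component exactly as in $\A$; the $T$-component $b$ of the symbol carries the running product $t_1\cdots t_{j-1}$, so that when the $j$-th generator $(s_j,t_j)$ acts it knows to apply $s_j^{\,b}$. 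Faithfulness is then immediate on the ``ideal'' strings $(A\times T)(A\times\{1_T\})^*$, and the extension to all of $C^*$ is a short bookkeeping check. No gates, no markers.

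Second, the marker/gate apparatus you import from Theorems~\ref{thm:finfree} and~\ref{thm:freeprodfull} is unnecessary here and reflects a misreading of what those devices are for. In the free-product constructions, successive $S$- and $T$-syllables must act on \emph{disjoint} segments of the input, and the markers record which segment has been ``used up''. The wreath product has no syllable structure: every generator acts on the whole string, merely twisted by an element of $T$. There is nothing for a marker to mark, and your concern that ``distinct reduced expressions leave distinct traces'' does not arise --- two words in $(Q')^+$ represent the same element of $S\wr T$ iff their $S^{|T|}$- and $T$-components agree, and both are read off directly from the action on a single ideal string.

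Finally, a clarification on your last paragraph: the accumulated $T$-product cannot be ``carried in the state set $Q$'' in the sense that matters. When $g_1,\ldots,g_m$ act successively, $g_j$ is a fixed single state with no access to $g_1,\ldots,g_{j-1}$ except through what they have written into the string. The state carries only the $T$-component of the \emph{current} generator (and the paper discards even that after one symbol); the accumulated product must live in the string. Your alphabet's $t$-component does exactly this, so your construction would likely work if carried through --- but stripped of the markers and rebuilt on top of the $S^{|T|}$-automaton it collapses to the paper's two-line definition.
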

\begin{proof}
First observe that $S^{|T|}$ (the direct product of $|T|$ copies of $S$) is an automaton semigroup by \cite[Proposition~5.5]{c_1auto},
and let $\A = (Q, A, \delta)$ be the standard automaton for $S^{|T|}$,
which has $Q = P^{|T|}$ (the Cartesian product of $|T|$ copies of $P$) for some generating set $P$ of $S$.

We construct an automaton $\B = (Q', C, \delta')$ with $\Sigma(\B) = S\wr T$.
Let $Q' = Q\times T$, $C = A\times B$, where $B$ is a copy of $T$,
and let $\delta': Q'\times C \rightarrow Q'\times C$ be given by:
\[ \left( (s,t), (a,b) \right) \mapsto \left( (s^b\pi_a, 1_T), (a\tau_{s^b}, bt) \right), \]
for $s\in Q$, $t\in T$, $a\in A$, $b\in B$, and $\pi_a$, $\tau_{s^b}$ are as in $\A$.
Note that after reading the first symbol in any $\alpha\in C^*$, the automaton $\B$ only utilises the
states of the form $(s,1_T)$, which act like $s$ on the first component of symbols in $C$ and leave the
second components unchanged. (See the example in Figure~\ref{fig:nwreathc2}.)

\begin{figure}[p]
\centering
\begin{tikzpicture}[x=40mm,y=40mm,every state/.style={rounded rectangle,inner sep=1mm,minimum size=1.75em}]
\node[state] (a) at (0,3.6) {$a$};
\node[state] (b) at (1,3.6) {$b$};
\begin{scope}[every node/.style={font=\footnotesize}]
  \draw[->] (a) edge[loop left] node {$*|*$} (a);
  \draw[->] (b) edge[loop right] node {$0|0$} (b);
  \draw[->] (b) edge node[anchor=south] {$1|0$} (a);
\end{scope}
\draw[lightgray,rounded corners,thick] (-.35,3.45) rectangle (1.35,3.75);
\node[state] (aa1) at (0,3) {$\bigl((a,a),1\bigr)$};
\node[state] (ab1) at (1,3) {$\bigl((a,b),1\bigr)$};
\node[state] (ba1) at (0,2) {$\bigl((b,a),1\bigr)$};
\node[state] (bb1) at (1,2) {$\bigl((b,b),1\bigr)$};
\node[state] (aac) at (0,1) {$\bigl((a,a),c\bigr)$};
\node[state] (abc) at (1,1) {$\bigl((a,b),c\bigr)$};
\node[state] (bac) at (0,0) {$\bigl((b,a),c\bigr)$};
\node[state] (bbc) at (1,0) {$\bigl((b,b),c\bigr)$};
\begin{scope}[every node/.style={font=\footnotesize}]
  \draw[->] (aa1) edge[loop above] node {$*|*$} (aa1);
  \draw[->] (ab1) edge[loop above] node[anchor=south,align=center] {$00e|00e$\\$10e|10e$} (ab1);
  \draw[->] (ba1) edge[loop left] node[align=center] {$00e|00e$\\$01e|01e$} (ab1);
  \draw[->] (bb1) edge[loop right] node[align=center,anchor=west] {$00e|00e$\\$00c|00c$} (bb1);
  \draw[->] (ab1) edge node[anchor=south,align=center] {$01e|00e$\\$10c|00c$\\
	$11e|10e$\\$11c|01c$} (aa1);
  \draw[->] (ab1) edge[bend left=5] node[pos=.3,anchor=north west,align=center,inner sep=.2mm] {$00c|00c$\\$01c|01c$} (ba1);
  \draw[->] (ba1) edge node[anchor=east,align=center] {$10e|00e$\\$01c|00c$\\
	$11e|01e$\\$11c|10c$} (aa1);
  \draw[->] (ba1) edge[bend left=5] node[pos=.3,anchor=south east,align=center,inner sep=.2mm] {$00c|00c$\\$10c|10c$} (ab1);
  \draw[line width=3pt,white] ($ (bb1) + (-.25,.25) $) -- ($ (aa1) + (.25,-.25) $);
  \draw[->] (bb1) edge node[rotate=-45,anchor=south,near start,inner sep=.3mm] {$11e|00e$} node[rotate=-45,anchor=north,near start,inner sep=.3mm] {$11c|00c$} (aa1);
  \draw[->] (bb1) edge node[near start,anchor=west,align=center] {$10e|00e$\\$10c|00c$} (ab1);
  \draw[->] (bb1) edge node[anchor=north,align=center,pos=.4] {$01e|00e$\\$01c|00c$} (ba1);
  \draw[rounded corners] (aac) -- node[anchor=south,align=center] {$\kappa \lambda e|\kappa \lambda c$} node[anchor=north,align=center] {$\kappa \lambda c|\kappa \lambda e$}
($ (aac) + (-35mm,0mm) $) -- ($ (aac) + (-35mm,10mm) $);
  \draw[rounded corners] (abc) -- node[pos=1,anchor=south,align=center,inner sep=.2mm] {$01e|00c$\;$10c|00e$\\$11e|10c$\;$11c|01e$} ($ (abc) + (-12.5mm,12.5mm) $) -- ($ (aac) + (-35mm,12.5mm) $) -- ($ (aac) + (-35mm,20mm) $);
  \draw[rounded corners] (abc) -- node[near start,anchor=north,align=center] {$00e|00c$\\$10e|10c$} ($ (abc) + (35mm,0) $) -- ($ (abc) + (35mm,10mm) $);
  \draw[rounded corners,->] (abc) -- node[near start,anchor=west,align=center] {$00c|00e$\\$01c|01e$} ($ (abc) + (0,.666) $) -- ($ (ba1) + (.333,-.333) $) -- (ba1);
  \draw[rounded corners,->] (bac) -- node[anchor=south west,align=center,inner sep=.2mm] {$00c|00e$\\$10c|10e$} ($ (bac) + (20mm,-20mm) $) -- ($ (bbc) + (35mm,-20mm) $) -- ($ (ab1) + (35mm,0mm) $) -- (ab1);
  \draw[rounded corners] (bac) -- node[very near start,anchor=south east,align=center,inner sep=.2mm]  {$01c|00e$\,$10e|00c$\\$11e|01c$\,$11c|01e$} ($ (bac) + (-35mm,0mm) $) -- ($ (aa1) + (-35mm,0mm) $) -- (aa1);
  \draw[line width=3pt,white] ($ (aac) + (.5,0) $) -- ($ (aac) + (0,.5) $);
  \draw[rounded corners,->] (bac) -- node[anchor=south east,align=center,inner sep=.2mm] {$00e|00c$\\$01e|01c$} ($ (bac) + (.5,.5) $) -- ($ (aac) + (.5,0) $) -- ($ (aac) + (0,.5) $) -- (ba1);
  \draw[line width=3pt,white] ($ (bbc) + (30mm,0mm) $) -- ($ (bb1) + (30mm,-30mm) $);
  \draw[rounded corners,->] (bbc) -- node[align=center,anchor=south] {$00e|00c$\\$00c|00e$} ($ (bbc) + (30mm,0mm) $) -- ($ (bb1) + (30mm,-30mm) $) -- ($ (bb1) + (10mm,-10mm) $) -- (bb1);
  \draw[line width=3pt,white] ($ (bbc) + (-15mm,-15mm) $) -- ($ (bac) + (-35mm,-15mm) $);
  \draw[rounded corners] (bbc) -- node[anchor=north west,align=center,near start,inner sep=.22mm] {$11e|00c$\\$11c|00e$} ($ (bbc) + (-15mm,-15mm) $) -- ($ (bac) + (-35mm,-15mm) $) -- ($ (bac) + (-35mm,10mm) $);
  \draw[rounded corners] (bbc) -- ($ (bbc) + (15mm,-15mm) $) -- node[anchor=south,align=center] {$10e|00c$\\$10c|00e$} ($ (bbc) + (35mm,-15mm) $) -- ($ (bbc) + (35mm,-5mm) $);
  \draw[rounded corners] (bbc) -- node[anchor=south west,align=center,inner sep=.2mm] {$01e|00c$\\$01c|00e$} ($ (bbc) + (-.5,.5) $) -- ($ (bbc) + (-.5,.75) $);
\end{scope}
\end{tikzpicture}
\caption{Automaton for the wreath product $\N_0
  \wr C_2$, where $C_2$ is the cyclic group of order $2$, with element set
  $\{e,c\}$.
  For reasons of space, symbols $\bigl((\kappa,\lambda),\mu\bigr)$
  are abbreviated $\kappa\lambda\mu$; thus $\bigl((1,0),c\bigr)$ is shown as
  $10c$. At the top of the diagram, in the grey box, is the original automaton for $\N_0$.}
\label{fig:nwreathc2}
\end{figure}
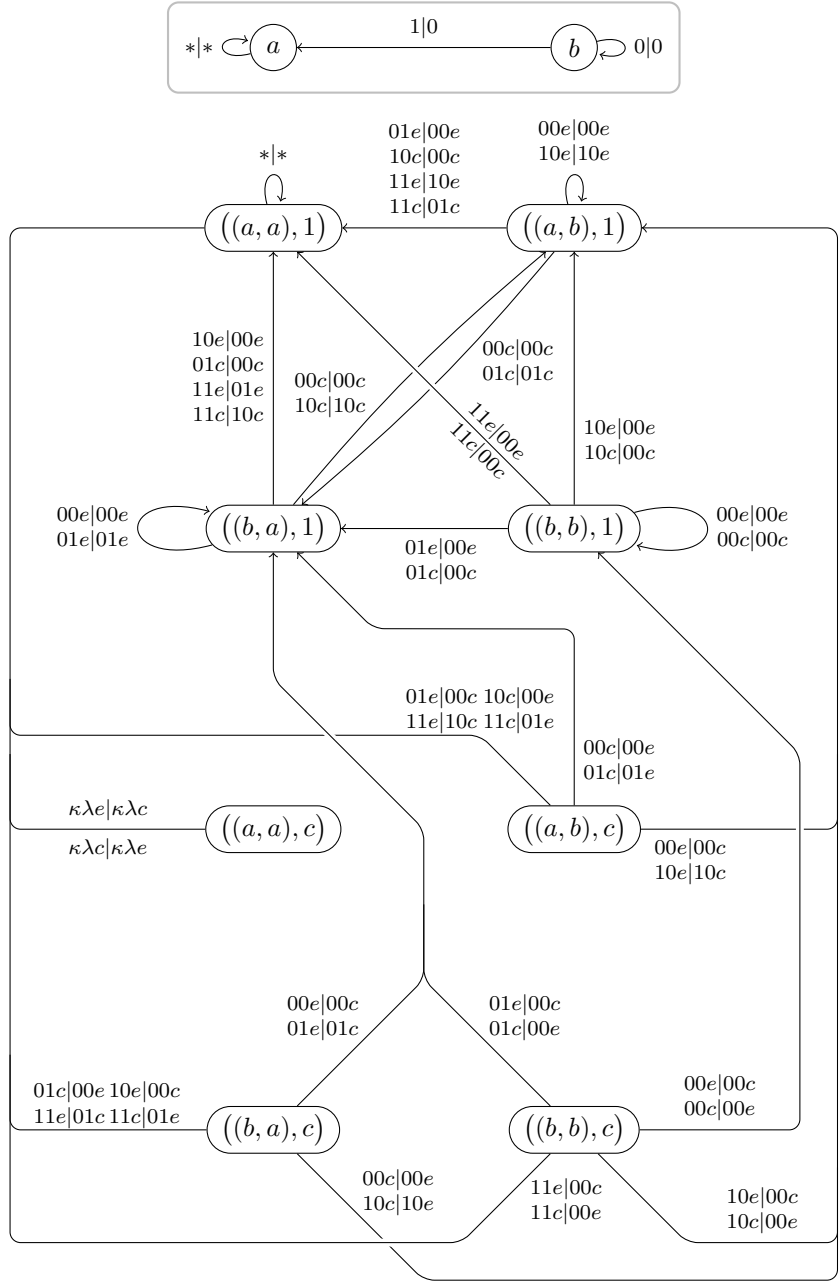

The `ideal' strings in $C^*$, which we use to distinguish elements of $S\wr T$, are those
from 
$D = (A\times T) (A\times \{1_T\})^*$.
For strings in $D$, we will simplify the notation by writing $(a_1,b_1)(a_2,1_T)\ldots (a_n,1_T)$
as $(a_1 a_2 \ldots a_n, b_1)$.  We have
\begin{align*}
(\alpha, 1_T) \cdot (s_1, t_1) (s_2, t_2) \ldots (s_m, t_m) &=
(\alpha\cdot s_1, t_1) \cdot (s_2, t_2) \ldots (s_m, t_m)\\
&= (\alpha\cdot s_1 s_2^{t_1}, t_1 t_2) \cdot (s_3, t_3) \ldots (s_m, t_m)\\
&= (\alpha\cdot s_1 s_2^{t_1} s_3^{t_1 t_2} \ldots s_m^{t_1 t_2\ldots t_m}, t_1 t_2 \ldots t_m).
\end{align*}
Since we already know that $\A$ gives a faithful action of $S^{|T|}$ on $A^*$, this shows that
$\B$ gives a faithful action of $S\wr T$ on $D^*$.  (Acting on $(\alpha, b)$ with $b\neq 1_T$
simply has the effect of premultiplying all the elements from $T$ by $b$.)
Hence any two words in $(Q')^*$ representing different elements of $S\wr T$ can be distinguished
by their actions on some word in $D\subset C^*$.

It is less obvious that $\B$ defines an action of $S\wr T$ on the remainder of $C^*$.
To see that it does, we view words in $C^*$ as a concatenation of words in $D$, and
then for $w\in (Q')^+$ we have:
\begin{multline*}
(\alpha_1, b_1) (\alpha_2, b_2) \ldots (\alpha_k, b_k)\cdot w\\
= [(\alpha_1, b_1)\cdot w] \; [(\alpha_2, b_2) \cdot (w_2,1_T)] \ldots [(\alpha_k, b_k)\cdot (w_k, 1_T)]
\end{multline*}
for some elements $w_j\in S$.  But $w_2$ is determined uniquely by $(\alpha_1,b_1)$ and the \emph{element}
of $S\wr T$ represented by $w$, and then each $w_j$ is determined recursively by all the $(\alpha_i, b_i)$
and $w_i$ for $i<j$ (setting $w_0 = w$), so that ultimately all the $w_j$ are determined uniquely by the string
$(\alpha_1, b_1)\ldots (\alpha_k, b_k)$ and the element represented by $w$.
Hence $\B$ does indeed define an action of $S\wr T$ on $C^*$, and so $S\wr T = \Sigma(\B)$.
\end{proof}

\section{Rees matrix semigroups}

Let us recall the definition of a Rees matrix semigroup. Let $M$ be a monoid, let $I$ and $\Lambda$ be abstract index
sets, and let $P\in \mathrm{Mat}_{\Lambda\times I}(M)$ (that is, $P$ is a $\Lambda \times I$ matrix with entries from
$M$). Denote the $(\lambda,i)$-th entry of $P$ by $p_{\lambda i}$. The \defterm{Rees matrix semigroup} over $M$ with
sandwich matrix $P$, denoted $\mathcal{M}[M;I,\Lambda;P]$, is the set $I \times M \times \Lambda$ with multiplication
defined by
\[
(i,x,\lambda)(j,y,\mu) = (i,xp_{\lambda j}y,\mu).
\]
The Rees matrix semigroup construction is particularly important because it arises in the classification of completely
simple semigroups; see \cite[Sect.~3.2--3.3]{howie_fundamentals} for background reading.

\begin{proposition}
\label{prop:rees}
Let $S$ be a Rees matrix semigroup $\mathcal{M}[M;I,\Lambda;P]$ with $I$ and $\Lambda$ finite and $M$ an automaton monoid
and $P\in \mathrm{Mat}_{\Lambda\times I}(M)$ a matrix containing the identity element of $M$ in some position.
If there exists an automaton for $M$ with state set $Q$ such that $1_M\in Q$ and $Q$ is closed under left-multiplication by
each non-zero entry of the matrix $P$, then $S$ is an automaton monoid.
\end{proposition}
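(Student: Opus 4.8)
The plan is to imitate the wreath-product construction of Theorem~\ref{thm:wreath}, replacing the finite monoid $T$ there by the finite ``skeleton'' $I\times\Lambda$ of the Rees matrix semigroup, and using the hypothesis on the state set $Q$ to guarantee that the sandwich-matrix multipliers can be absorbed into the automaton without enlarging $Q$. Concretely, I would build an automaton $\B = (Q',C,\delta')$ for $S = \mathcal M[M;I,\Lambda;P]$ as follows. Since $I$ and $\Lambda$ are finite, let $C = A\times I\times\Lambda$ where $A$ is the alphabet of a fixed automaton $\A = (Q,A,\delta)$ for $M$ with $1_M\in Q$ and $Q$ closed under left-multiplication by every non-zero $p_{\lambda j}$; let $Q' = Q\times I\times\Lambda$ together with one extra ``sink'' state (details below), and let a generating set for $S$ be $\{(i, p, \lambda): p\in P\}$ where $P$ generates $M$. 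The idea, exactly as in Theorem~\ref{thm:wreath}, is to design ``ideal'' strings $D\subseteq C^*$ of the form $(A\times I\times\Lambda)(A\times\{i_0\}\times\{\lambda_0\})^*$ on which the action faithfully encodes multiplication in $S$, then check separately that $\B$ really does define an action of $S$ on all of $C^*$.

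The key steps, in order, are: (1) Define $\delta'$ so that on the first symbol $(a, i, \lambda)$ read, a state $(s, j, \mu)$ records the index data and, crucially, \emph{rewrites the $M$-state it will use on subsequent symbols to $p_{\mu i}\cdot s$} — this is where closure of $Q$ under left-multiplication by the entries $p_{\mu i}$ is needed, so that $p_{\mu i}s\in Q$ and the recursion stays inside $Q'$. (2) After the first symbol, the automaton should behave like the action of the appropriate element of $M$ (obtained from the $\A$-action of $s$, suitably pre-multiplied) on the $A$-components, leaving the $I$- and $\Lambda$-components alone, and updating the outgoing index pair to $(i, \mu)$ — mimicking $(i,x,\lambda)(j,y,\mu) = (i, x p_{\lambda j} y, \mu)$. (3) Verify that on a string $(\alpha, i, \lambda)\in D$, acting by $(j_1,y_1,\mu_1)(j_2,y_2,\mu_2)\cdots$ produces $(\alpha\cdot x', i, \mu_n)$ where $x'$ is precisely the $M$-part of the product in $S$; this uses faithfulness of $\A$ on $A^*$ and the closure hypothesis to know all intermediate $M$-states are legitimate. (4) For a string that is \emph{not} of the ideal form, decompose it as a concatenation of $D$-blocks and argue, exactly as in the proof of Theorem~\ref{thm:wreath}, that the element of $M$ used on each subsequent block is determined recursively by the preceding blocks together with the element of $S$ represented by the input word — hence $\B$ defines a genuine action of $S$, not merely a partial one. (5) Handle the presence of the identity $p_{\lambda_0 i_0} = 1_M$ in $P$: the position containing $1_M$ is what makes it possible to recover an arbitrary element $(i, x, \lambda)$ of $S$ from an action, since sandwiching by $1_M$ is trivial, and it is also what makes $S$ a monoid (with identity $(i_0, 1_M, \lambda_0)$, using the row/column through the $1_M$-entry), so that $1_M\in Q$ gives an identity state in $Q'$.

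The main obstacle I expect is step~(4): ensuring that $\B$ defines a \emph{total} action of $S$ on all of $C^*$ rather than merely a faithful action on the ideal strings $D^*$. On non-ideal strings the ``wrong'' index pair can appear at a block boundary, and one must check that the resulting $M$-element applied to the next block is nonetheless well-defined as a function of the input element of $S$ — the argument is the same recursive-determination argument used for the wreath product, but one has to be careful that the sandwich entries $p_{\mu i}$ that get introduced at each boundary are all non-zero (they are, since the non-zero hypothesis is exactly on the entries of $P$, which here are all of $M$ as $M$ is a monoid and there is no zero) and hence keep the states inside $Q'$ via the closure hypothesis. A secondary subtlety is purely bookkeeping: the automaton must not distinguish $M$-states that are equal in $M$, so one should phrase everything in terms of the homomorphism $\phi$ and invoke Lemma~\ref{lem:eq}, and one should double-check that the transformation $\delta'$ is genuinely a function of $Q'\times C$ (single-valued), which it is by construction since $p_{\mu i}s$, $s\pi_a$ and $a\tau_s$ are all uniquely determined. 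Once these points are in place, faithfulness on $D^*$ combined with totality on $C^*$ yields $\Sigma(\B)\cong S$, and the identity state shows $S$ is an automaton monoid.
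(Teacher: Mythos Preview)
Your instinct to imitate the wreath-product construction is right, and absorbing the sandwich multiplier into the $M$-state via the closure hypothesis on $Q$ is exactly what the paper does. But the proposed alphabet $C = A\times I\times\Lambda$ together with the rule ``output index pair $(i,\mu)$'' has a faithfulness gap. First a typo: the sandwich entry must be $p_{\lambda j}$ (symbol's $\Lambda$-coordinate, state's $I$-coordinate), as your own quoted formula $(i,x,\lambda)(j,y,\mu)=(i,xp_{\lambda j}y,\mu)$ shows, not $p_{\mu i}$. More seriously, with your rule the state's $I$-coordinate $j$ is never written into the output; it enters only through the factor $p_{\lambda j}$ in the $M$-action. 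Thus acting by $(j,x,\mu)$ on an ideal string with first index pair $(i,\lambda)$ yields the $M$-action of $p_{\lambda j}x$ on the $A$-components and output index pair $(i,\mu)$. Ranging over $\lambda$ you recover $\mu$ and the family $\{p_{\lambda j}x:\lambda\in\Lambda\}$, but not $j$ and $x$ separately: if $P$ has two equal columns $j\neq j'$, the distinct elements $(j,x,\mu)$ and $(j',x,\mu)$ act identically on every string. The $1_M$ entry at $(\lambda_0,i_0)$ does not rescue this, since you do not control the state's $I$-coordinate.

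The paper avoids this by taking a \emph{disjoint-union} alphabet $B=A\cup C$ with $C=(I\cup\{e\})\times\Lambda$, where $e\notin I$ is a fresh marker. On the marker symbol $(e,\lambda)$ the state $(j,x,\mu)$ outputs $(j,\mu)$ --- overwriting $e$ with its own $I$-coordinate --- and recurses to $(1,x,1)$, so the ensuing $M$-action on $A^*$ is exactly by $x$ with no initial sandwich. A subsequent generator reading a symbol $(i,\lambda)$ with $i\in I$ outputs $(i,\mu)$ and recurses to $(1,p_{\lambda j}x,1)$, inserting the sandwich. Hence from $(e,\lambda)\alpha\cdot w=(j,\mu)(\alpha\cdot x)$ one reads off all three components of $w$. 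The marker $e$ is precisely what your construction lacks: it plays the role of a left identity for the $I$-component of the rectangular-band skeleton $I\times\Lambda$, a role no element of $I$ itself can fill.
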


[Note that if $P$ consists only of ones and zeros, then the hypothesis on $Q$ is always satisfied.]

\begin{proof}
Let $\A = (Q,A,\delta)$ be an automaton with $\Sigma(\A) = M$, such that
$Q$ satisfies the hypothesis of the theorem.
The fact that $1_M$ is in $Q$ and also appears in the matrix $P$ ensures that
$S = \mathcal{M}[M;I,\Lambda;P]$ is generated by the finite set $Q' = I\times Q\times \Lambda$.
Let $I = \{1,\ldots,k\}$ and $\Lambda = \{1,\ldots,l\}$.
Let $P = (p_{\lambda i})$.

We construct an automaton $\B$ with $\Sigma(\B) = S$.
Let $\B = (Q', B, \delta')$ with $B = A\cup C$, where $C = ((I\cup \{e\}) \times \Lambda)$
and $\delta'$ the transformation of $Q'\times B$ given by
\begin{align*}
\left( (j,x,\mu),(e,\lambda)\right) &\mapsto \left( (1,x,1), (j,\mu)\right)\\
\left( (j,x,\mu),(i,\lambda)\right) &\mapsto \left( (1,p_{\lambda j}x,1), (i,\mu) \right)\\
\left( (j,x,\mu), a\right)          &\mapsto \left( (1,x\pi_a,1), a\tau_x\right)
\end{align*}
for $x\in Q$, $i\in I$, $j\in I$, $\lambda,\mu\in \Lambda$ and $a\in A$,
and $\pi_b$, $\tau_x$ as in $\A$.

\begin{figure}[p]
\centering
\begin{tikzpicture}[x=30mm,y=30mm,every state/.style={rounded rectangle,inner sep=1mm,minimum size=1.75em}]
\node[state] (e) at (0,5) {$a$};
\node[state] (x) at (1,5) {$b$};
\node[state] (z) at (.5,4.5) {$0$};
\begin{scope}[every node/.style={font=\footnotesize}]
\draw[->] (x) edge[loop above] node[align=center] {$0|0$} (x);
\draw[->] (x) edge node[anchor=south] {$1|0$} (e);
\draw[->] (e) edge[loop above] node[align=center] {$0|0$\\$1|1$} (e);
\draw[->] (z) edge[loop below] node[align=center] {$0|0$\\$1|0$} (z);
\end{scope}
\draw[lightgray,thick,rounded corners] (-.5,4) rectangle (1.5,5.5);
\node[state] (1a1) at (0,3) {$(1,a,1)$};
\node[state] (1b1) at (1,3) {$(1,b,1)$};
\node[state] (1z1) at (.5,2) {$(1,0,1)$};
\node[state] (1a2) at (3,3) {$(1,a,2)$};
\node[state] (1b2) at (4,3) {$(1,b,2)$};
\node[state] (1z2) at (3.5,2) {$(1,0,2)$};
\node[state] (2a1) at (0,0) {$(2,a,1)$};
\node[state] (2b1) at (1,0) {$(2,b,1)$};
\node[state] (2z1) at (.5,1) {$(2,0,1)$};
\node[state] (2a2) at (3,0) {$(2,a,2)$};
\node[state] (2b2) at (4,0) {$(2,b,2)$};
\node[state] (2z2) at (3.5,1) {$(2,0,2)$};
\begin{scope}[every node/.style={font=\footnotesize}]
\draw[->] (1b1) edge[loop right] node[align=center] {$e\lambda|11$\\$i1|i1$\\$0|0$} (1b1);
\draw[->,rounded corners] (1b1) -- ($ (1b1) + (-.25,-.25) $) -- node[align=center,anchor=east,near start] {$i2|i1$} ($ (1z1) + (.25,.25) $) -- (1z1);
\draw[->] (1b1) edge node[anchor=north,align=center] {$1|0$} (1a1);
\draw[->] (1a1) edge[loop above] node[align=center] {$e\lambda|11$\\$i1|i1$\\$0|0$\\$1|1$} (1a1);
\draw[->,rounded corners] (1a1) -- ($ (1a1) + (.25,-.25) $) -- node[align=center,anchor=west] {$i2|i1$} ($ (1z1) + (-.25,.25) $) -- (1z1);
\draw[->] (1z1) edge[loop left] node[align=center,near start,anchor=north] {$e\lambda|11$\\$i\lambda|i1$\\$0|0$\\$1|0$} (1z1);
\draw[->,rounded corners] (1a2) -- ($ (1a2) + (-.2,.2)$) -- node[align=center,anchor=north,very near start] {$e\lambda|12$\,$i1|i2$\\$0|0$\,$1|1$} ($ (1a1) + (.2,.2)$) -- (1a1);
\draw[rounded corners] (1a2) -- node[align=center,anchor=south east,pos=.6,inner sep=.2mm] {$i2|i2$} ($ (1a2) + (-1,-1)$) -- (1z1);
\draw[->] (1z2) -- node[align=center,anchor=north,very near start] {$e\lambda|12$\,$i\lambda|i2$\\$0|0$\,$1|0$} (1z1);
\draw[line width=3pt,white] ($ (1b2) + (-.3,-.3)$) -- ($ (1b1) + (.3,-.3)$);
\draw[->,rounded corners] (1b2) -- ($ (1b2) + (-.3,-.3)$) -- node[align=center,anchor=south,pos=.1] {$e\lambda|12$\\$i1|i2$\\$0|0$} ($ (1b1) + (.3,-.3)$) -- (1b1);
\draw[->,rounded corners] (1b2) -- ($ (1b2) + (-.3,.3)$) -- node[anchor=north,very near start] {$1|0$} ($ (1a1) + (.3,.3) $) -- (1a1);
\draw[rounded corners] (1b2) -- ($ (1b2) + (0,-.2)$) -- ($ (1b2) + (-.6,-.8) $) -- node[anchor=south,very near start,align=center] {$i2|i2$} ($ (1a2) + (-.8,-.8) $) -- ($ (1a2) + (-.9,-.9) $);
\draw[rounded corners] (2b1) -- ($ (2b1) + (-.3,0) $) -- node[align=center,anchor=south west,inner sep=.2mm,near start] {$i1|i1$\\$1|0$} ($ (2a1) + (-.2,.9) $) -- ($ (1a1) + (-.2,-.2) $) -- (1a1);
\draw[rounded corners] (2b1) -- node[align=center,anchor=north west,inner sep=.2mm,near start] {$i2|i1$} ($ (2b1) + (.5,.5) $) -- ($ (1z1) + (1,-1) $) -- ($ (1z1) + (.5,-.5) $) -- (1z1);
\draw[->,rounded corners] (2a1) -- node[align=center,anchor=south west,inner sep=.1mm] {$e\lambda|21$\\$i1|i1$\\$0|0$\\$1|1$} ($ (2a1) + (-.3,.3) $) -- ($ (1a1) + (-.3,-.3) $) -- (1a1);
\draw[rounded corners] (2a1) -- ($ (2a1) + (.2,-.2) $) -- node[align=center,anchor=south west,near start,inner sep=.1mm] {$i2|i1$} ($ (2b1) + (.3,-.2) $) -- ($ (2b1) + (.5,0) $) -- ($ (2b1) + (.5,.75) $);
\draw[->] (2z1) -- node[align=center,anchor=west,pos=.3] {$e\lambda|21$\\$i\lambda|i1$\\$0|0$\\$1|0$} (1z1);
\draw[line width=3pt,white] ($ (1b2) + (0,-.4) $) -- ($ (1b1) + (.4,-.4) $);
\draw[rounded corners] (2b2) -- node[align=center,anchor=east,very near start] {$e\lambda|22$\\$i1|i2$\\$0|0$} ($ (1b2) + (0,-.4) $) -- ($ (1b1) + (.4,-.4) $) -- (1b1);
\draw[rounded corners] (2b2) -- ($ (2b2) + (.3,.3) $) -- node[anchor=east] {$1|0$} ($ (1b2) + (.3,.1)$) -- ($ (1b2) + (0,.4) $) -- ($ (1a1) + (.4,.4) $) -- (1a1);
\draw[rounded corners] (2b2) -- ($ (2b2) + (-.2,.2) $) -- node[anchor=south,align=center] {$i2|i2$} ($ (1z1) + (1.8,-1.8) $) -- ($ (1z1) + (1.6,-1.6) $);
\draw[rounded corners] (2a2) -- ($ (2a2) + (-.3,-.3) $) -- node[align=center,anchor=south,pos=.35] {$e\lambda|22$\\$i1|i2$\\$0|0$\\$1|1$} ($ (2a1) + (-.1,-.3) $) --  ($ (2a1) + (-.4,0) $) -- ($ (1a1) + (-.4,-.4) $) -- (1a1);
\draw[rounded corners] (2a2) -- ($ (1z1) + (2,-2) $) -- node[align=center,anchor=south west,inner sep=-.2mm,near start] {$i2|i2$} ($ (1z1) + (.5,-.5) $);
\draw[->,rounded corners] (2z2) -- node[align=center,anchor=south,very near start] {$e\lambda|22$\,$i\lambda|i2$\\$0|0$\,$1|0$} ($ (1z1) + (1,-1) $) -- (1z1);
\draw[line width=3pt,white] ($ (2b1) + (0,.5) $) -- ($ (1b1) + (0,-.5) $);
\draw[->] (2b1) edge node[align=center,anchor=west,near start] {$e\lambda|21$\\$i1|i1$\\$0|0$} (1b1);
\end{scope}
\end{tikzpicture}
\caption[]{Above, the automaton for semigroup $F^0$, where $F$ is the free monoid generated by $b$, and with identity
  $a$. Below, the automaton for $\mathcal{M}[I,F^0,\Lambda,P]$, where $I = \Lambda = \{1,2\}$ and $P = \bigl[\begin{smallmatrix}a & a\\ 0 & 0\end{smallmatrix}\bigr]$.}
\label{fig:reesmatrix}
\end{figure}

This construction somewhat resembles the automaton for the wreath product,
in that it is designed to perform the appropriate `twist' to the action
of $M$.
The `ideal' strings, which we use to distinguish elements of $S$, are those in
$D = (\{e\}\times \Lambda)A^*$.  For $(j_i,x_i,\mu_i)\in (Q')^*$,
let $(j,x,\mu)$ be the element of $S$ represented by
$(j_1,x_1,\mu_1)\ldots (j_k,x_k,\mu_k)$.
For $(e,\lambda)\alpha\in D$ we have
\begin{align*}
& (e,\lambda)\alpha\cdot (j_1,x_1,\mu_1)\ldots (j_k,x_k,\mu_k) \\
={}& \left[(j_1,\mu_1) (\alpha\cdot x_1)\right]\cdot (j_2,x_2,\mu_2)\ldots (j_k,x_k,\mu_k)\\
={}& \left[(j_1,\mu_2) (\alpha\cdot x_1 p_{\mu_1 \lambda_2} x_2\right] \cdot (j_3,x_3,\mu_3)\ldots (j_k,x_k,\mu_k)\\
={}& (j_1,\mu_k) (\alpha\cdot x_1 p_{\mu_1 \lambda_2} x_2 \ldots p_{\mu_{k-1}\lambda_k} x_k)\\
={}& (j,\mu) (\alpha\cdot x).
\end{align*}
Thus $\B$ defines a faithful action of $S$ on $D$.

It is then easy to see that this action extends to an action on the whole of $B^*$.
We can write any string in $B^*$ as an alternating product of strings in
$A^*$ and $C^*$.
Let $w\in (Q')^*$ with $w=_S (j,x,\mu)\in Q'$.
When $w$ acts on $\alpha\in A^*$, the output string is
$\alpha\cdot x$ and the automaton ends in state $(1,x|_\alpha,1)$ , while
when $w$ acts on $(i_1,\lambda_1)\ldots,(i_k,\lambda_k)\cdot (j,x,\mu)$,
the output string is $(i_1\cdot j, \mu) (i_2,1)\ldots (i_k\cdot j, 1)$
(where $I\cup \{e\}$ is treated as a left zero semigroup with adjoined identity $e$)
and the automaton ends in state $p_{\lambda_1 j} p_{\lambda_2 1} \ldots p_{\lambda_k 1} x$.
Since these end states and outputs depend only on $(j,x,\mu)$ and the input string,
we conclude that $\A$ defines an action of $S$ on alternating products of
strings in $A^*$ and $C^*$, that is, on $B^*$.  Moreover, this action is faithful,
since elements can be distinguished by their actions on $D$.  Hence $\Sigma(\A)\cong S$.
\end{proof}

\section{Strong semilattices of semigroups}

We recall the definition of a strong semilattice of semigroups here, and refer the reader to
\cite[Sect.~4.1]{howie_fundamentals} for further background reading:

\begin{definition}
  Let $Y$ be a semilattice. Recall that the meet of $\alpha,\beta \in Y$ is denoted $\alpha \land \beta$. For each $\alpha \in Y$, let
  $S_\alpha$ be a semigroup. For $\alpha \geq \beta$, let $\phi_{\alpha,\beta} : S_\alpha \to S_\beta$ be a homomorphism
  such that
\begin{enumerate}
\item For each $\alpha \in Y$, the homomorphism $\phi_{\alpha,\alpha}$ is the identity mapping.
\item For all $\alpha,\beta,\gamma \in Y$ with $\alpha \geq \beta \geq \gamma$,
\[
\phi_{\alpha,\beta}\phi_{\beta,\gamma} = \phi_{\alpha,\gamma}.
\]
\end{enumerate}
The \defterm{strong semilattice of semigroups} $S =
\mathcal{S}[Y;S_\alpha;\phi_{\alpha,\beta}]$ consists of the disjoint
union $\bigcup_{\alpha \in Y} S_\alpha$ with the following
multiplication: if $x \in S_\alpha$ and $y \in S_\beta$, then
\[
xy = (x\phi_{\alpha,\alpha \land \beta})(y\phi_{\beta,\alpha\land\beta}),
\]
where $\alpha \land \beta$ denotes the greatest lower bound of
$\alpha$ and $\beta$.
\end{definition}

The following result proves that a certain type of strong semilattice of automaton semigroups is itself an automaton
semigroup. Although it is of restricted scope, this result is of independent interest, and it will also be applied in
the following section on small extensions of automaton semigroups.

\begin{proposition}\label{prop:semilattice}
  Let $S_1,\ldots,S_k$ be automaton semigroups and $T$ a finite semigroup with a right zero. Let $Y$ be the semilattice
  where all the $S_i$ are mutually incomparable and all greater than $T$, which is the minimum element of $Y$. Then the
  strong semilattice $S = {\cal S}[Y; S_1,\ldots,S_k,T; \phi_1,\ldots,\phi_k]$ with $\phi_i:S_i\rightarrow T$ is an
  automaton semigroup.
\end{proposition}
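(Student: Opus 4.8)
The plan is to build a single automaton $\mathcal{B}=(Q,C,\delta)$ whose states comprise disjoint copies of the state sets of automata for the $S_i$ together with the finite set $T$, and whose action simulates each $S_i$ separately while \emph{collapsing} onto $T$ as soon as two components interact. First I would record the structural facts that drive everything. Since $z$ is a right zero of $T$ it is idempotent, and because every product that leaves a single $S_i$ factors through the maps $\phi_i$, the element $z$ is in fact a right zero of the whole of $S$. Fix automata $\mathcal{A}_i=(Q_i,A_i,\delta_i)$ with $\Sigma(\mathcal{A}_i)\cong S_i$, taken with pairwise disjoint alphabets, and put $Q=\bigsqcup_i Q_i\sqcup T$. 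The key combinatorial observation is that a word $w=g_1\cdots g_n\in Q^+$ represents an element of $S_j$ precisely when every $g_i$ lies in $Q_j$, in which case $w$ evaluates inside $S_j$; otherwise $w$ represents the element $\prod_i(g_i\phi)$ of $T$, where $g\phi=g\phi_c$ if $g\in Q_c$ and $g\phi=g$ if $g\in T$. That this $T$-value is well defined uses only that each $\phi_i$ is a homomorphism, so an $S_j$-run may be evaluated in $S_j$ first and then mapped down.

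For the alphabet I would use, for each component $j$, a family of ``$j$-tagged'' diagnostic symbols carrying a letter of $A_j$ together with a mark bit (as with the marked dominoes of Theorem~\ref{thm:finfree}), a family of \emph{register} symbols carrying an element of $T^1$ (playing the role of the gates of Theorem~\ref{thm:finfree}, but recording a running product in $T$), and a single inert symbol that every state fixes. The transitions are: a state $q\in Q_j$ acts on an unmarked $j$-tagged symbol exactly as in $\mathcal{A}_j$ (marking it), and on every other symbol it \emph{collapses}, i.e.\ it acts as the element $q\phi_j\in T$ does; a state $t\in T$ fixes (or inertly marks) all diagnostic symbols and sends a register symbol $[r]$ to $[rt]$. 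The distinguished idempotent right zero $z$ serves as the uniform absorbing target of the collapse, so that once a word has ``gone mixed'' the remainder of its action is governed entirely by $T$.

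Two things then have to be checked. Faithfulness is the easy half: genuine $S_j$-elements are separated from each other by their $\mathcal{A}_j$-action on strings of $j$-tagged symbols, are separated from every other type because all non-$S_j$ words freeze those strings, and distinct elements of $T$ are separated by the value $[1]\cdot w=[t]$ read off a single register symbol initialised at $1$. The main obstacle is well-definedness: one must show that words equal in $S$ induce the \emph{same} endomorphism of $C^*$, and the delicate case is a string interleaving $j$-tagged symbols with register symbols, where the $\mathcal{A}_j$-simulation (which advances the internal state) has to be reconciled with the running product recorded in $T$. Here the homomorphism property of $\phi_j$ guarantees that evaluating an $S_j$-run and then collapsing agrees with collapsing letter by letter, while the right-zero/idempotent behaviour of $z$ guarantees that the collapsed part is absorbing and independent of \emph{where} the mixing began. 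Exactly as in the wreath and Rees constructions, I would make this precise by isolating ``ideal'' strings, establishing the evaluation formula on them, and then extending to all of $C^*$ by writing an arbitrary string as a concatenation of ideal blocks.
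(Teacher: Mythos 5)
Your overall architecture (tagged diagnostic symbols per component, $T^1$-valued registers, collapse onto $T$ with an absorbing right zero) is the same strategy as the paper's, but one transition rule as you state it is fatal: you have a state $q\in Q_j$ \emph{mark} the $j$-tagged symbol it reads. Marking-on-first-touch is appropriate in Theorem~\ref{thm:finfree} only because there the factors are \emph{finite}, so the entire product can be stored inside the cell ($\domino{a}{}\cdot s = \domino{as}{}$); here $S_j$ is an arbitrary automaton semigroup, so the simulation of $\mathcal{A}_j$ must continue letter by letter under \emph{every} subsequent generator. Under your rule, for $w = g_1g_2\cdots g_n\in Q_j^+$ only $g_1$ acts via $\mathcal{A}_j$: it marks the string, whereupon $g_2,\ldots,g_n$ see marked symbols and collapse, acting through $\phi_j$. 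The endomorphism induced by $w$ is then completely determined by the pair consisting of the $\mathcal{A}_j$-action of $g_1$ and the element $\phi_j(w)\in T$ (registers and all foreign symbols see only $\phi_j(w)$; $j$-tagged strings see only $g_1$, then freeze). Since there are only finitely many such pairs while $S_j$ is in general infinite, pigeonhole gives $u,u'\in Q_j^+$ with $u\neq_{S_j}u'$ but identical actions, so $\Sigma(\mathcal{B})$ is a proper quotient of $S$. (Concretely, $q$ and $q^2$ are identified whenever $q\neq_{S_j}q^2$ and $\phi_j(q)$ is idempotent --- e.g.\ always when $T$ is trivial.)

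The repair is exactly where the paper's construction differs from yours: states of $Q_i$ never disable their home component --- they act \emph{and recurse} through $\mathcal{A}_i$ indefinitely, and forgetting is applied only to the other components. The paper takes every letter to be a $(k+1)$-tuple in $A_1^0\times\cdots\times A_k^0\times T^1$: a state $q\in Q_i$ rewrites component $i$ via $\mathcal{A}_i$, overwrites the other $k-1$ components with $0$, multiplies the $T^1$-register by $\phi_i(q)$, and recurses to $q\pi_{a_i}$, falling into the absorbing right zero $z$ only when component $i$ is already dead; $T$-states zero everything, update the register, and go to $z$. Two further points your sketch leaves unresolved would also need fixing. First, your hedge that $T$-states ``fix (or inertly mark)'' diagnostic symbols must come down on the side of marking (or zeroing): if $t\in T$ merely fixed an unmarked $j$-tagged string $\alpha$, then $\alpha\cdot tq_j$ would be the $\mathcal{A}_j$-action of $q_j$, whereas the element $t\phi_j(q_j)\in T$ that $tq_j$ represents must fix $\alpha$, breaking well-definedness. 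Second, because in the paper's construction every letter carries its own register, the register in the $m$-th letter receives $b_m\phi_i(w|_{\alpha_{m-1}})$ --- it is the \emph{restrictions} of $w$, not $w$ itself, that feed the registers, and well-definedness on interleaved strings holds precisely because restrictions are invariants of the element $s_w$ represented by $w$. This is the rigorous form of the ``reconciliation'' you allude to but do not carry out.
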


\begin{proof}
Let $z$ be a distinguished right zero in $T$.
For $1\leq i\leq k$, let $\A_i = (Q_i, A_i, \delta_i$) be an automaton
for $S_i$.
Let $P$ be a copy of $T$ and $B$ a copy of $T^1$ and for $1\leq i\leq k$ let
$A_i^0 = A_i\cup \{0\}$, where $0$ is a new symbol not in any $A_i$ or $B$.
Let $\B = (Q,C,\delta)$, where $Q = Q_1\cup \ldots\cup Q_k\cup P$,
$C = A_1^0\times \ldots \times A_k^0 \times B$ and $\delta$ is defined as follows:
Let $c = (a_1,\ldots,a_k,b)$ be any element of $C$.
For $p\in P$ we define $(p,c)\delta = (z, (0,\ldots,0,bp))$.
For $a\in A_i$ and $q\in Q_i$, let $\pi_a$ and $\tau_q$ be as in $\A_i$.
We extend $\tau_q$ to $\tau'_q:A_i^0\rightarrow A_i^0$ by defining $a\tau'_q$ to be
$a\tau_q$ if $a\neq 0$ and $0$ if $a=0$.  The output on reading $c$
in state $q$ is $(a'_1,\ldots,a'_k,b\phi_i(q))$, where $a'_i = a_i\tau'_q$
and $a'_j = 0$ for all $j\neq i$; and we move to state $q\pi_{a_i}$ if $a_i\neq 0$
and to state $z$ otherwise.

\begin{figure}[p]
\centering
\begin{tikzpicture}[x=30mm,y=30mm,every state/.style={rounded rectangle,inner sep=1mm,minimum size=1.75em}]
\node[state] (a1) at (0,3.1) {$a$};
\node[state] (b1) at (1,3.1) {$b$};
\node[state] (c1) at (2,3.1) {$c$};
\node[state] (d1) at (3,3.1) {$d$};
\node[state] (e1) at (1,2) {$e$};
\node[state] (z1) at (2,2) {$z$};
\draw[->] (a1) edge[loop above] node {$1|1$} (a1);
\draw[->] (a1) edge[bend left=10] node[anchor=south] {$2|1$} (b1);
\draw[->] (b1) edge[loop above] node {$2|2$} (b1);
\draw[->] (b1) edge[bend left=10] node[anchor=north] {$1|2$} (a1);
\draw[->] (c1) edge[loop above] node {$\beta|\beta$} (c1);
\draw[->] (d1) edge[loop above] node {$1|1$} (d1);
\draw[->] (d1) edge node[anchor=south] {$2|1$} (c1);
\draw[->] (e1) edge[loop above] node[align=center] {$e|e$\\$z|z$} (e1);
\draw[->] (z1) edge[loop above] node[align=center] {$e|z$\\$z|z$} (z1);
\draw[lightgray,thick,rounded corners] (-.45,2.8) rectangle (1.45,3.6);
\draw[lightgray,thick,rounded corners] (1.55,2.8) rectangle (3.45,3.6);
\draw[lightgray,thick,rounded corners] (.5,1.7) rectangle (2.5,2.7);
\node[state] (a) at (0,1) {$a$};
\node[state] (b) at (1,1) {$b$};
\node[state] (c) at (2,1) {$c$};
\node[state] (d) at (3,1) {$d$};
\node[state] (e) at (1,0) {$e$};
\node[state] (z) at (2,0) {$z$};
\begin{scope}[every node/.style={font=\footnotesize}]
\draw[->] (a) edge[loop above] node[align=center] {$1\beta x|10x$\\$10x|10x$} (a);
\draw[->] (a) edge[bend left=10] node[align=center,anchor=south] {$2\beta x|10x$\\$20x|10x$} (b);
\draw[->] (a) edge[bend right=10] node[align=center,anchor=north east,inner sep=.2mm,pos=.4] {$0\beta x|00x$\\$00x|00x$} (z);
\draw[->] (b) edge[loop above] node[align=center] {$2\beta x|20z$\\$20x|20z$} (b);
\draw[->] (b) edge[bend left=10] node[align=center,anchor=north,near start] {$1\beta x|20z$\\$10x|20z$} (a);
\draw[->] (b) edge node[align=center,anchor=north east,inner sep=.2mm,pos=.4] {$0\beta x|00z$\\$00x|00z$} (z);
\draw[->] (c) edge[loop above] node[align=center] {$\alpha\beta x|0\beta x$\\$0\beta x|0\beta x$} (c);
\draw[->] (c) edge node[align=center,anchor=west,near start] {$\alpha 0x|00x$\\$00x|00x$} (z);
\draw[->] (d) edge[loop above] node[align=center] {$\alpha 1x|01z$\\$01x|01z$} (d);
\draw[->] (d) edge node[align=center,anchor=south] {$\alpha 2x|01z$\\$02x|01z$} (c);
\draw[->] (d) edge[bend left=10] node[align=center,anchor=north west,inner sep=.2mm] {$\alpha 0x|00z$\\$00x|00z$} (z);
\draw[->] (e) edge node[align=center,anchor=north,near start] {$\alpha\beta x|00x$\\$\alpha 0x|00x$\\$0\beta x|00x$\\$00x|00x$} (z);
\draw[->] (z) edge[loop below] node[align=center] {$\alpha\beta x|00z$\\$\alpha 0x|00z$\\$0\beta z|00z$\\$00x|00z$} (z);
\end{scope}
\draw[lightgray,thick,rounded corners] (-.5,-.75) rectangle (3.5,1.6);
\end{tikzpicture}
\caption{Below, the automaton for the strong semilattice of semigroups $\mathcal{S}[Y;S_1,S_2,T;\phi_i]$,
  where $S_1
  = F_2$ (with basis $\{a,b\}$), $S_2 = \N_0$ (with $c$ being the additive identity of $\N_0$ and
  $d$
  representing the natural number $1$),
  and $T
  = \{e,z\}$ (where $e^2 = e$ and $ez = ze = z^2 = z$), where $a\phi_1 = e$ and $b\phi_1 = z$, and $c\phi_2 =
  e$ and $d\phi_1 = z$. Throughout the diagram, $\alpha$ and $\beta$ are arbitrary symbols in $\{1,2\}$ and
  $x$
  is an arbitrary symbol in $\{e,z\}$.
  For reasons of space, triples $(\kappa,\lambda,\mu)$
  are abbreviated $\kappa\lambda\mu$. Above, the original automata for (clockwise from top left) $F_2$, $\N_0$, and $T$.}
\label{fig:strongsemilattice}
\end{figure}
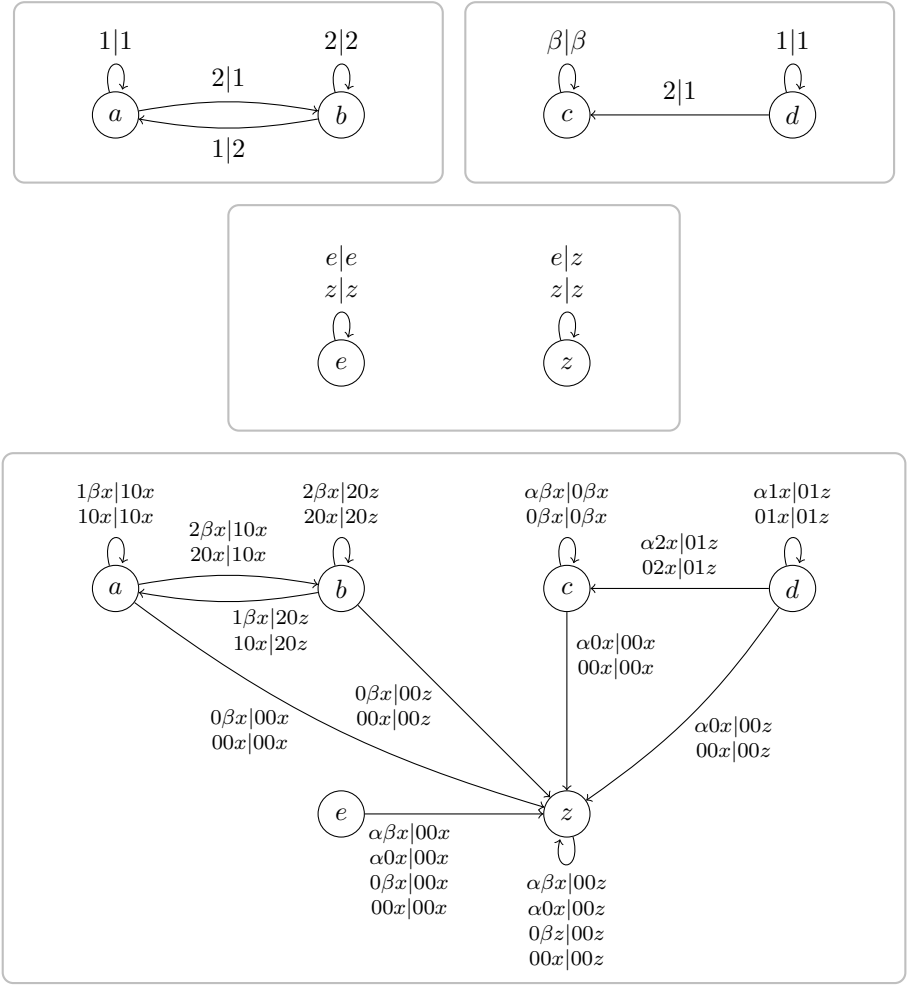

For $p\in P$ we have $C^\omega\cdot p\subseteq C (0,\ldots,0,z)^\omega$, so the action of
$P^+$ on $C^\omega$ depends only on its action on $C$, which is essentially
just the action by right multiplication on $B=T^1$, so $\langle P\rangle\cong T$.

Fix some $i\in \{1,\ldots,k\}$, and let $C_i$ be the subset of $C$ consisting of
all tuples without $0$ in their $i$-th component.  Then every string in $C^*$ is
a prefix of some $\alpha c \beta$, where $\alpha\in C_i^*$, $c\in C\setminus C_i$
and $\beta\in C^\omega$.  Now for $w\in Q_i^+$
\[ \alpha c \beta \cdot w = (\alpha\cdot w) (c\cdot w|_\alpha) z^\omega.\]
Since the state transitions during the computation of $\alpha\cdot w$ are
governed entirely by the $i$-th components of the symbols in $\alpha$, which
in turn are the same as in $\A_i$, $w|_\alpha$ depends only on $\alpha$ and
the element $s_w$ of $S_i$ represented by $w$.  For a string $\gamma\in C^*$
and $j\in \{1,\ldots,k+1\}$, denote by $\gamma(j)$ the string obtained by reading off the
$j$-th components of symbols in $\gamma$. Then we have
$(\alpha\cdot w)(i) = \alpha(i)\cdot w$ and $(\alpha\cdot w)(j) = 0^m$
for $j\notin \{i,k+1\}$, where $m = |\alpha|$.
Thus the action of $Q_i^+$ on the first $k$ components of strings in $C^*$ is an action
of $S_i$.  Now we need to check the action on the final component.
Let $\alpha_j$ be the prefix of $\alpha$ of length $j$, for $0\leq j\leq m$,
and let $b_j$ be the final component of the $j$-th symbol in $\alpha$.
Then $(\alpha\cdot w)(k+1) = b'_1\ldots b'_m$ with $b'_j =  b_j\phi_i(w|_{\alpha_{j-1}})$.
Since each $w|_{\alpha_j}$ depends only on $\alpha$ and $s_w$, this concludes the proof that
the action of $Q_i^+$ on $C^*$ is an action of $S_i$.
Moreover, this action is faithful, due to the action on the $i$-th component of strings
being identical to the action in $\A_i$.  So $Q_i$ generates a subsemigroup of $\Sigma(\B)$
isomorphic to $S_i$.

Finally, we establish that the multiplication works correctly outside of the
defining subsemigroups $S_1,\ldots,S_k,T$.
For any word $w\in Q^+$ containing elements from more than one defining subsemigroup,
we have $C^\omega \cdot w\subseteq C(0,\ldots,0,z)^\omega$.
This is because acting by a state in $P$, or acting on a tuple with $0$ in the $i$-th
position by a state in $Q_i$, both cause a transition to the state $z$, which sends all
strings to $(0,\ldots,0,z)$.
Hence the action of `multi-subsemigroup' words on $C^\omega$ is determined entirely
by their action on $C$.  For $c = (a_1,\ldots,a_k,b)\in C$, $p\in P$, $q_i\in Q_i$
and $q_j\in Q_j$ with $i\neq j$, we have
\begin{align*}
c\cdot pq_i &= (0,\ldots,0,bp)\cdot q_i = (0,\ldots,0,bp\phi_i(q)) = c\cdot \ol{pq_i},\\
c\cdot q_ip &= (0,\ldots,0,a_i\tau_{q_i},0,\ldots,0,b\phi_i(q_i))\cdot p = (0,\ldots,0,b\phi_i(q)p) = c\cdot \ol{q_ip},\\
c\cdot q_i q_j &= (0,\ldots,0,a_i\tau_{q_i},0,\ldots,0,b\phi_i(q_i))\cdot q_j = (0,\ldots,0,b\phi_i(q_i)\phi_j(q_j)) = c\cdot \ol{q_iq_j},
\end{align*}
where $\ol{w}$ denotes the element of $S$ represented by $w$.
Hence $\Sigma(\B)\cong S$.
\end{proof}

\section{Small extensions}
\label{sec:smallext}

Recall that if $S$ is a semigroup and $T$ is a subsemigroup of $S$ with $S \setminus T$ finite, then $S$ is
a \defterm{small extension} of $T$.

In this section, we present some examples of small extensions of automaton semigroups
that are again automaton semigroups.  Our first example is the $k=1$ case of Proposition~\ref{prop:semilattice}.
(For $k\geq 2$, the semigroups in Proposition~\ref{prop:semilattice} are not small extensions of
automaton semigroups.)

\begin{example}\label{ex:semilattice}
  Let $S$ be an automaton semigroup and let $T$ be a finite semigroup with a right zero.  Then if $\phi: S\rightarrow T$
  is any homomorphism, the strong semilattice $\S(\{1<2\};S,T;\phi)$ is an automaton semigroup.
\end{example}

This example is interesting because it has the potential to lead to an answer to one of
the basic open questions about automaton semigroups (see \cite[Open problem~5.3]{c_1auto}):
\begin{question}\label{q:zero}
Does there exist a non-automaton semigroup $S$ such that $S^0$ is an automaton semigroup?
\end{question}
If we can prove that some similar strong semilattice with the finite
semigroup $T$ \emph{not} having a right zero is \emph{not} an automaton semigroup, then we will have an example of a
semigroup that is not an automaton semigroup, but becomes one on adjoining a zero.
We conjecture that the following strong semilattices of semigroups are not automaton semigroups:
\begin{itemize}
\item $F_2=\langle x,y\rangle$ above $C_2 = \{e,f\}$ with $\phi(x)=e, \phi(y)=f$.
\item $\N_0=\langle 0,1\rangle$ above $C_2 = \{e,f\}$ with $\phi(0)=e, \phi(1)=f$.
\end{itemize}

Example~\ref{ex:semilattice} can be generalised in a different direction:

\begin{proposition}\label{prop:smallext}
Let $S_1$ be an automaton semigroup and $S_2$ a finite semigroup with a right zero.
Then any semigroup $S = S_1\cup S_2$ having $S_2$ as an ideal is an automaton semigroup.
\end{proposition}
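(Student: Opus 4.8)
The plan is to adapt the construction behind Proposition~\ref{prop:semilattice} (of which Example~\ref{ex:semilattice} is the case $k=1$), replacing the single homomorphism $\phi\colon S_1\to S_2$ by the full \emph{left regular representation} of $S_1$ on the finite ideal $S_2$, so as to cope with multiplication between $S_1$ and $S_2$ that is not induced by a homomorphism. I take $S=S_1\sqcup S_2$ to be a disjoint union with $S_1$ a subsemigroup and $S_2$ a finite ideal possessing a right zero $z$ (so $yz=z$ for all $y\in S_2$); a key consequence is that any word over the generators that represents an element of $S_2$ must actually use a generator of $S_2$, since products of generators of $S_1$ stay inside $S_1$. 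Fix an automaton $\A_1=(Q_1,A_1,\delta_1)$ with $\Sigma(\A_1)=S_1$, and for $x\in S_1$ let $\lambda_x\colon S_2\to S_2$, $y\mapsto xy$, be left multiplication; since $S_2$ is an ideal this is a transformation of $S_2$, and $x\mapsto\lambda_x$ is a homomorphism into the finite full transformation monoid of $S_2$. Write $L=\{\mathrm{id}\}\cup\{\lambda_x:x\in S_1\}$, a finite set closed under post-composition $\mu\mapsto\mu\circ\lambda_q$ by each $\lambda_q$ ($q\in Q_1$), since $\lambda_x\circ\lambda_q=\lambda_{xq}$.

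I then build $\B=(Q,C,\delta)$ with states $Q=Q_1\sqcup P$, where $P$ is a copy of $S_2$, and alphabet $C=A_1^0\times B$, where $A_1^0=A_1\cup\{0\}$ (with $0$ a new ``dead'' symbol) and $B=L\sqcup S_2$. The two coordinates do independent jobs: the $A_1$-coordinate runs $\A_1$ verbatim, so as to recover elements of $S_1$; the $B$-coordinate computes the $S_2$-value of any word representing an element of $S_2$. The transition $\delta$ is governed by the following bookkeeping. A generator $q$ of $S_1$, reading a symbol still in the ``$L$-phase'' (a $B$-value $\mu\in L$), post-composes, $\mu\mapsto\mu\circ\lambda_q$; reading one already in the ``$S_2$-phase'' (a $B$-value $y\in S_2$) it right-multiplies, $y\mapsto yq$. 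A generator $p$ of $S_2$ converts $L$-phase to $S_2$-phase by evaluation, $\mu\mapsto\mu(p)\in S_2$, and otherwise right-multiplies, $y\mapsto yp$. On the $A_1$-coordinate each $Q_1$-state acts exactly as in $\A_1$ while that coordinate is alive (moving to $q\pi_a$ for $a\in A_1$), and any $S_2$-generator, or the symbol $0$, sends the $A_1$-coordinate to $0$ and the state to the sink $z\in P$. The correctness of the $L$-track rests on the order of composition: accumulating $\mu\mapsto\mu\circ\lambda_q$ left-to-right over a prefix $q_1\cdots q_m$ yields $\lambda_{q_1}\circ\cdots\circ\lambda_{q_m}=\lambda_{q_1\cdots q_m}$, so that when the first $S_2$-generator $p$ is met the $B$-coordinate becomes precisely $(q_1\cdots q_m)\cdot p$, the left multiple demanded by the ideal structure; later generators then right-multiply in the correct order. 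The right zero enters exactly here: in state $z$ every $S_2$-valued symbol goes to $yz=z$, so once a word has committed to $S_2$ its action on the rest of a string is constant, just as in Proposition~\ref{prop:semilattice}.

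Two things then remain. \emph{That $\B$ defines an action of $S$:} as in Proposition~\ref{prop:semilattice} I would check that the action of a word $w\in Q_1^+$ depends only on the element $\bar w\in S_1$ it represents — the $A_1$-coordinate runs $\A_1$, the $L$-phase values accumulate to $\mu\circ\lambda_{\bar w}$ and the $S_2$-phase values to $y\bar w$, both functions of $\bar w$ alone, while the sink collapses every tail consistently — that $P^+$ acts as $S_2$, and that mixed products reproduce the ideal multiplication. \emph{That the action is faithful:} distinct elements of $S_1$ are separated on strings over $A_1\times\{\mathrm{id}\}$ because $\A_1$ is faithful; distinct $y\ne y'$ of $S_2$ are separated by the single symbol $(0,\mathrm{id})$, whose $B$-coordinate the discussion above transforms into $y$ and $y'$ respectively; and an element of $S_1$ is separated from one of $S_2$ because the former leaves the $B$-coordinate in $L$ whereas the latter drives it into $S_2$.

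The main obstacle is the first verification: showing that $\delta$ really realises the multiplication of $S$ on \emph{all} of $C^*$, not just on the ``ideal'' probing strings. The delicacy is that a single letter's run over a string can enter the sink part-way along, so one must confirm that the interleaving of $L$-phase post-composition, $S_2$-phase right multiplication, and the right-zero collapse is consistent for any two words representing the same element — in particular that the left-multiplication-by-prefix encoded in $L$ never corrupts the $A_1$-coordinate's faithful simulation of $\A_1$. Keeping the two coordinates independent and exploiting the absorbing behaviour of the right zero is precisely what forces this consistency, and it is the step I expect to demand the most careful casework.
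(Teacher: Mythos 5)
Your proposal is correct and takes essentially the same route as the paper's own proof: your $L$ is exactly the paper's $\Lambda=\{\lambda_s : s\in S_1\}\cup\{\iota_B\}$, and your post-composition $\mu\mapsto\mu\circ\lambda_q$ on the $L$-track, evaluation $\mu\mapsto\mu(p)$ at the first $S_2$-generator, right multiplication $y\mapsto yq$ thereafter, and absorbing right zero $z$ as sink state correspond one-to-one to the paper's $\mu\lambda_x$, $\ol{\mu y}$, $\rho_x$ and $z$, with faithfulness argued identically (the $\A_1$-simulation on the first coordinate separates $S_1$, and an identity seed separates $S_2$). The only differences are packaging: you use a product alphabet $A_1^0\times(L\sqcup S_2)$ with a dead symbol in the style of Proposition~\ref{prop:semilattice}, where the paper uses the union alphabet $(A\times\Lambda)\cup B$ with $B$ a copy of $S_2^1$ and places the seed $\ol{1}$ in the value track rather than your $(0,\mathrm{id})$ in the $L$-track.
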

\begin{proof}
Let $\A_1 = (Q_1,A,\delta_1)$ be any automaton for $S_1$
and let $\A_2 = (Q_2,B,\delta_2)$
be the automaton for $S_2$ having $Q_2 = S_2$, with $B$ be a copy of $S_2^1$ (whose elements
we will denote in the form $\ol{b}$) and
$(t,\ol{b})\delta_2 = (z,\ol{bt})$ for $t\in Q_2$, $\ol{b}\in B$, where $z$ is some right zero in $Q_2$.
We construct an automaton $\A$ with $\Sigma(\A) = S$ as follows:
For each $s\in S_1$, let $\lambda_s$ and $\rho_s$ be the transformations of $B$ induced by the left
and right actions respectively of $s$ on $S_2$.  Define $\Lambda = \{\lambda_s\mid s\in S_1\}\cup \{\iota_B\}$.
Since $\Lambda$ is a subsemigroup of the full (left) transformation semigroup of $B$, it is finite.
Let $C = (A\times \Lambda)\cup B$ and let $\A = (Q_1\cup Q_2, C, \delta)$ with $\delta$ given by
\begin{align*}
(x, (a,\mu)) &\mapsto (x\pi_a, (a\tau_x, \mu\lambda_x)) &  (x, \ol{b}) &\mapsto (z, \ol{b\rho_x})\\
(y, (a,\mu)) &\mapsto (z,\ol{\mu y})           &  (y, \ol{b}) &\mapsto (z, \ol{by})
\end{align*}
for $x\in Q_1$, $y\in Q_2$, $a\in A$, $\mu\in \Lambda$, $\ol{b}\in B$ and for $\pi_a$ and $\tau_x$
as in $\A_1$.

We begin by considering the action of words in $Q_2^+$.
For $(a,\mu)\in A\times \Lambda$, $\ol{b}\in B$, $\alpha\in C^\omega$ and $y\in Q_2$,
we have $(a,\mu)\alpha\cdot y = \ol{\mu y} z^\omega$ and $\ol{b}\alpha\cdot y = \ol{by}z^\omega$.
Thus the action of $w\in Q_2^+$ on $C^\omega$ depends only on its actions on $\Lambda$ and on $B$,
both of which depend only on the element of $S_2$ represented by $w$.  Moreover, the action on
$\Lambda$ is faithful, since $\iota_B \ol{b} = \ol{b}$ for all $\ol{b}\in B$.
Hence the subsemigroup of $\Sigma(\A)$ generated by $Q_2$ is isomorphic to $S_2$.

Next we consider the action of words in $Q_1^+$.
For $\alpha\in (A\times \Lambda)^*$, $\ol{b}\in B$, $\gamma\in C^\omega$ and $w\in Q_1^+$ we have
\[ \alpha \ol{b} \gamma \cdot w = (\alpha\cdot w) \ol{b\rho_{w|_\alpha}} z^\omega.\]
Let $s_w$ be the element of $S_1$ represented by $w$ in $\Sigma(\A_1)$.
If $\alpha = (a_1,\mu_1)\ldots (a_k,\mu_k)$, let $\alpha\cdot w = (c_1,\nu_1)\ldots (c_k,\nu_k)$.
Then in $\A_1$ we have $a_1\ldots a_k\cdot w = c_1\ldots c_k$.
Since $\Sigma(\A_1) = S_1$, this means that $\A$ restricted to states in $Q_1$ defines a faithful
action of $S_1$ on the first component of strings in $(A\times \Lambda)^*$.
Also note that $\rho_{w|_\alpha}$ depends only on $\alpha$ and $s_w$,
so that the only possible obstacle to $\langle Q_1\rangle$ being isomorphic to $S_1$ would be the action
of $w$ on the second component of strings in $(A\times \Lambda)^*$ not depending only on $s_w$.
However, if for $0\leq i\leq k-1$ we let $\alpha_i$ be the prefix of $\alpha$ of length $i$, then
$\nu_{i+1} = \mu_{i+1} \lambda_{w|_{\alpha_i}}$, and so the second component of $\alpha\cdot w$
also depends only on $s_w$ and $\alpha$.  Hence $Q_1$ generates a subsemigroup of $\Sigma(\A)$
isomorphic to $S_1$.

It remains to establish that products $s_1 s_2$ and $s_2 s_1$ with $s_i\in S_i$ act correctly.
Since $C^\omega\cdot S_2\subseteq C z^\omega$, we only need to consider the action on $C$.
For $a\in A$, $\mu\in \Lambda$, $\ol{b}\in B$ and $s_i\in S_i$ with $s_1s_2 =_S s$ and $s_2s_1 =_S s'$  we have
\begin{align*}
(a,\mu)\cdot s_1 s_2 &= (a\tau_{s_1}, \mu\lambda_{s_1})\cdot s_2 = \ol{\mu\lambda_{s_1} s_2} = (a,\mu)\cdot s,\\
(a,\mu)\cdot s_2 s_1 &= \ol{\mu s_2}\cdot s_1 = \ol{\mu s_2 \rho_{s_1}} = (a,\mu)\cdot s',\\
\ol{b}\cdot s_1 s_2 &= \ol{b\rho_{s_1}}\cdot s_2 = \ol{b\rho_{s_1} s_2} = \ol{b}\cdot s,\\
\ol{b}\cdot s_2 s_1 &= \ol{b s_2}\cdot s_1 = \ol{bs_2\rho_{s_1}} = \ol{b}\cdot s'.
\end{align*}
Hence $\Sigma(\A)\cong S$ and so $S$ is an automaton semigroup.
\end{proof}

In \cite{maltcev_hopfian}, Maltcev and Ru\v{s}kuc gave a construction for a type of small extension
as follows.  Let $S$ be any semigroup acting on a finite set $X$ (on the right).
Let $x^s$ denote the result of acting on $x\in X$ by $s\in S$.
The semigroup $S[X]$ is defined to be the disjoint union of $S$ and $X$, with multiplication given by
\[
st = st,\qquad xs = x^s, \qquad sx = x, \qquad xy = y,
\]
for $s,t\in S$, $x,y\in X$.

\begin{example}
Let $S$ be an automaton semigroup and $X$ a finite set.  Then the semigroup $S[X]$
is an automaton semigroup.
\end{example}
\begin{proof}
This follows immediately from Proposition~\ref{prop:smallext}, since $X$ is
both an ideal of $S[X]$ and a right zero semigroup.
Note that the automaton from Proposition~\ref{prop:smallext} may be simplified
in this case, since every element of $S$ acts as a left identity on $X$ and hence
the set $\Lambda$ is superfluous.  An example is shown in Figure~\ref{fig:smallext}.
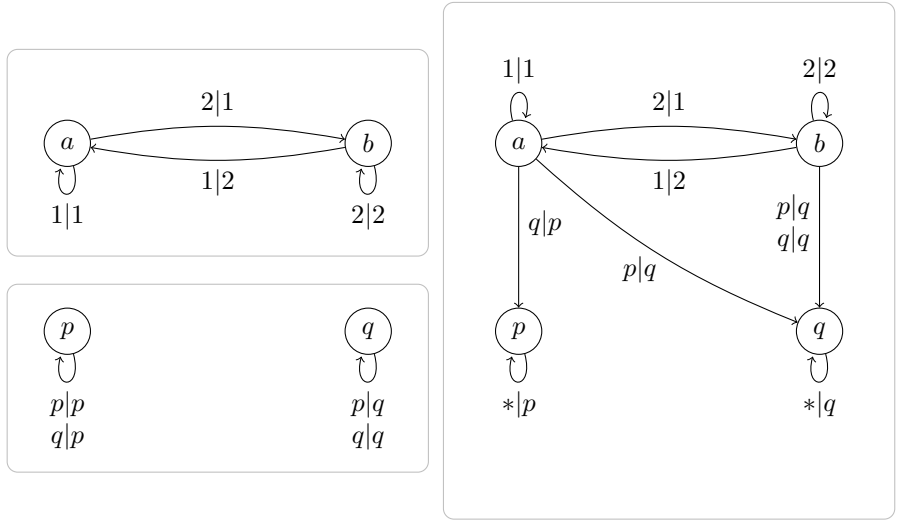
\begin{figure}[t]
\centering
\begin{tikzpicture}[x=20mm,y=25mm,every state/.style={rounded rectangle,inner sep=1mm,minimum size=1.75em}]
\node[state] (x1) at (0,1) {$a$};
\node[state] (y1) at (2,1) {$b$};
\node[state] (e1) at (0,0) {$p$};
\node[state] (z1) at (2,0) {$q$};
\draw[->] (x1) edge[loop below] node {$1|1$} (x1);
\draw[->] (x1) edge[bend left=10] node[anchor=south] {$2|1$} (y1);
\draw[->] (y1) edge[loop below] node {$2|2$} (y1);
\draw[->] (y1) edge[bend left=10] node[anchor=north] {$1|2$} (x1);
\draw[->] (e1) edge[loop below] node[align=center] {$p|p$\\$q|p$} (e1);
\draw[->] (z1) edge[loop below] node[align=center] {$p|q$\\$q|q$} (z1);

\draw[lightgray,rounded corners] (-.4,-.75) rectangle (2.4,.25);
\draw[lightgray,rounded corners] (-.4,.4) rectangle (2.4,1.5);
\node[state] (a) at (3,1) {$a$};
\node[state] (b) at (5,1) {$b$};
\node[state] (e) at (3,0) {$p$};
\node[state] (z) at (5,0) {$q$};
\draw[->] (a) edge[loop above] node[align=center] {$1|1$} (a);
\draw[->] (a) edge[bend left=10] node[align=center,anchor=south] {$2|1$} (b);
\draw[->] (b) edge[loop above] node[align=center] {$2|2$} (b);
\draw[->] (b) edge[bend left=10] node[align=center,anchor=north] {$1|2$} (a);
\draw[->] (e) edge[loop below] node[align=center] {$*|p$} (e);
\draw[->] (z) edge[loop below] node[align=center] {$*|q$} (z);
\draw[->] (a) edge node[align=center,anchor=west,pos=.4] {$q|p$} (e);
\draw[->] (a) edge[bend right=10] node[align=center,anchor=north east,inner sep=.2mm] {$p|q$} (z);
\draw[->] (b) edge node[align=center,anchor=east,pos=.4] {$p|q$\\$q|q$} (z);
\draw[lightgray,rounded corners] (2.5,-1) rectangle (5.5,1.75);
\end{tikzpicture}
\caption{On the right, the automaton for the semigroup $F_2[X]$ with $F_2 = \langle a,b\rangle$,
$X=\{p,q\}$ and $p^a=q$, $q^a=p$, $p^b=q$, $q^b=q$. On the left, the original automata for
  $F_2$ and $X$.}
\label{fig:smallext}
\end{figure}
\end{proof}

It is not obvious how to construct a (right) automaton for the dual of $S[X]$
(where $S$ acts on the left and $X$ is a left zero semigroup),
because the idea of Proposition~\ref{prop:smallext} relies heavily on using a right zero
to `forget' information once it is no longer required.  A left zero cannot
be used in the same way.
This leads to the following question.

\begin{question}
Does there exist a semigroup $S$ such that $S$ is a right automaton semigroup
but not a left automaton semigroup?
\end{question}

\section{Some new examples of non-automaton semigroups}
\label{sec:examples}

There is so far no general method known for proving a finitely generated semigroup not to be an automaton semigroup,
other than by showing that it fails to have one of the properties satisfied by all automaton semigroups, such as being
residually finite or having solvable word problem.  Previously, the only known example of a semigroup that has these
`general' automaton semigroup properties but that is known \emph{not} to arise as an automaton semigroup was $\N$, the
free semigroup of rank $1$ \cite[Proposition~4.3]{c_1auto}.

Every subsemigroup of $\N$ is finitely generated \cite[Theorem~2.7]{rosales_numerical}. (This also follows from noting
that any subsemigroup $S$ of $\N$ is isomorphic to one whose elements have least common multiple $1$, and that such a
semigroup contains all but finitely many natural numbers by Euclid's algorithm and is thus a large subsemigroup of the
finitely generated semigroup $\N$. Finite generation is preserved on passing to large subsemigroups
\cite[Theorem~1.1]{ruskuc_largesubsemigroups}.) Furthermore, subsemigroups of $\N$ are residually finite.  In this
section we show that they do not arise as automaton semigroups.  The importance of this result is that we now have a
countable set of semigroups that satisfy the usual `general' properties of automaton semigroups, but that are not
actually automaton semigroups.

In fact we will show slightly more.  We establish that no subsemigroup of $\N^0$
(the free semigroup of rank $1$ with a zero adjoined, not to be confused with
$\N_0$, the free monoid of rank $1$) is an automaton semigroup.
This establishes that subsemigroups of $\N$ are not potential examples for a `yes'
answer to Question~\ref{q:zero} on the existence of non-automaton semigroups which
become automaton semigroups upon adjoining a zero.

Our approach is akin to the proof of \cite[Proposition~4.3]{c_1auto},
in that we use wreath recursions to show that any hypothetical automaton for
a subsemigroup of $\N$ must contain a state corresponding to a periodic element.
We make use of the following simple lemma.

\begin{lemma}
\label{lem:periodic}
Let $\A = (Q,B,\delta)$ be an automaton such that $\Sigma(\A)$ has a zero.
If there exist $q, z\in Q$, with $z$ representing the
zero element of $S$, such that $q$ recurses only to itself and $z$,
then $q$ represents a periodic element of $S$.
\end{lemma}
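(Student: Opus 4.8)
The plan is to analyse the powers $q^k$ through their wreath recursions and to show that, once $k$ is large enough, the element $q^k$ of $S=\Sigma(\A)$ is determined by the transformation $\tau_q^k\in\rel{T}_B$ alone; since $B$ is finite this forces $\{q^k:k\ge 1\}$ to be finite, i.e.\ $q$ to be periodic. Throughout I would use that $z\phi$ is the zero of $S$, so that \emph{any} word over $Q$ in which the letter $z$ occurs represents $z$ in $S$ (the zero being absorbing on both sides), and hence, by Lemma~\ref{lem:eq}, acts on $B^\omega$ exactly as $z$ does.

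First I would unwind the hypothesis. Saying that ``$q$ recurses only to itself and $z$'' means precisely that $q\pi_b\in\{q,z\}$ for every $b\in B$, so the wreath recursion of $q$ reads $q=(q\pi_0,\dots,q\pi_{|B|-1})\tau_q$ with all sections in $\{q,z\}$. Iterating \eqref{eq:wreath} (an easy induction on $k$) then shows that $q^k$ has top transformation $\tau_q^k$ and that its section at $b\in B$ is represented by the word
\[
w_b^{(k)} \;=\; (q\pi_b)(q\pi_{b\tau_q})(q\pi_{b\tau_q^2})\cdots(q\pi_{b\tau_q^{k-1}}),
\]
a product of $k$ letters, each equal to $q$ or to $z$.

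Next I would classify the symbols in a way that does not depend on $k$: call $b\in B$ \emph{good} if $q\pi_{b\tau_q^j}=q$ for all $j\ge 0$, and \emph{bad} otherwise; for bad $b$ let $j_b\ge 0$ be least with $q\pi_{b\tau_q^{j_b}}=z$, and put $J=\max\{j_b:b\text{ bad}\}$ (and $J=0$ if there is no bad symbol). Then for every $k>J$ the section $q^k|_b$ equals $q^k$ in $S$ when $b$ is good (the word $w_b^{(k)}$ is literally $q^k$), and equals $z$ in $S$ when $b$ is bad (the word $w_b^{(k)}$ contains the letter $z$, since $j_b\le J\le k-1$). From this I would compute, for $k>J$, the action of $q^k$ on an arbitrary $\beta=\beta_1\beta_2\cdots\in B^\omega$ by peeling off symbols through the first-level decomposition: each leading good symbol $\beta_i$ contributes $\beta_i\tau_q^k$ and returns the residual operator to $q^k$ again, until the first bad symbol $\beta_m$ (if any) is met, which contributes $\beta_m\tau_q^k$ and then sends the residual action to $z$, giving
\[
\beta\cdot q^k \;=\; (\beta_1\tau_q^k)(\beta_2\tau_q^k)\cdots(\beta_m\tau_q^k)\bigl((\beta_{m+1}\beta_{m+2}\cdots)\cdot z\bigr),
\]
with the obvious reading $\beta\cdot q^k=(\beta_1\tau_q^k)(\beta_2\tau_q^k)\cdots$ when every $\beta_i$ is good. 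The tail factor is independent of $k$, so $\beta\cdot q^k$ depends on $k$ only through $\tau_q^k$.

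Finally, since $\tau_q$ is a transformation of the finite set $B$, there are $a\ge 1$ and $p\ge 1$ with $\tau_q^{k+p}=\tau_q^k$ for all $k\ge a$; taking any $k\ge\max(a,J+1)$ the displayed formula yields $\beta\cdot q^k=\beta\cdot q^{k+p}$ for all $\beta\in B^\omega$, whence $q^k=q^{k+p}$ in $S$ by Lemma~\ref{lem:eq}, so $q$ represents a periodic element of $S$. I expect the main obstacle to be the book-keeping in the peeling step: carefully justifying that the residual operator genuinely returns to $q^k$ after each good symbol and collapses to $z$ at the first bad one. This is precisely where it matters both that $z$ is a genuine (absorbing) zero and that the good/bad dichotomy is independent of $k$.
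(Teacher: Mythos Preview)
Your proof is correct and follows essentially the same route as the paper: both arguments rest on the observation that every first-level section of $q^n$ is a length-$n$ word over $\{q,z\}$ and hence equals either $q^n$ or $z$ in $S$. The paper finishes by pigeonholing on the pair (section pattern, $\tau_q^{\,n}$), while you refine this slightly by showing the section pattern stabilises once $n>J$ and then pigeonhole only on $\tau_q^{\,n}$; your explicit peeling-off computation on $B^\omega$ also makes transparent the self-similarity step (``same pattern $\Rightarrow$ same element'') that the paper leaves implicit.
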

\begin{proof}
Let $q = (q_1,\ldots,q_k)\tau$ be the wreath recursion for $q$,
with $q_i\in \{q,z\}$.  Then for any $n$, we have
$q^n = (u_{n1},\ldots,u_{nm})\tau^n$,
where each $u_{ni}$ can be expressed as a product of $n$ elements from
$\{q,z\}$, and is hence in $\{q^n, z\}$.
But this means that two distinct powers of $q$ must have identical recursion
patterns
(that is, there exist distinct $m,n$ such that $\tau^m = \tau^n$, with
$u_{mi} = q^m$ if and only if $u_{ni} = q^n$)
and hence represent the same element of $S$, and so $q$ is periodic.
\end{proof}

\begin{theorem}
\label{thm:nosubsemigroupofn}
No subsemigroup of $\N^0$ except the trivial subsemigroup $\{0\}$ is an automaton semigroup.
\end{theorem}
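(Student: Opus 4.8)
The plan is to argue by contradiction: suppose some non-trivial $S\le\N^0$ is an automaton semigroup, fix an automaton $\A=(Q,B,\delta)$ with $\Sigma(\A)\cong S$, and produce a state that represents a \emph{periodic} element. Since every non-zero element $a$ of $\N^0$ is aperiodic (the powers $a,2a,3a,\dots$ are distinct) and the only periodic element is the adjoined zero, such a state would be forced to represent the zero, contradicting that it represents a non-zero element. First I would normalise $\A$. Replacing $Q$ by the (finite) set of all sections of its states and then merging states that represent the same element of $S$, I may assume that $Q$ is closed under sections, that distinct states represent distinct elements, and that $Q$ generates $S$. Passing to an isomorphic copy (dividing through by the greatest common divisor of the non-zero values), I may also assume that the non-zero part of $S$ is a numerical semigroup, so that each non-zero state $q$ carries a well-defined \emph{value} $v(q)\in\N$, and in particular that there is a \emph{unique} state of least positive value.

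Next I would pin down the zero, so that Lemma~\ref{lem:periodic} becomes applicable in the case $S$ has a zero $z$. The point is that $z$ is represented by a single stationary state. Indeed, $z$ is the unique idempotent of $\N^0$; writing the wreath recursion $z=(z|_{b_0},\dots,z|_{b_{|B|-1}})\tau_z$ and using $z^2=z$ shows that $\tau_z$ is idempotent and that each section $z|_b$ is an idempotent of $S$, hence equals $z$. Thus $z=(z,\dots,z)\tau_z$ recurses only to itself. Since $z\in S$, adjoining this state alters neither $S$ nor the section-closure of $Q$, so I may assume $z\in Q$.

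The heart of the argument is then the following claim: there is a non-zero state $q$ all of whose sections $q\pi_b$ lie in $\{q,z\}$ (with the clause involving $z$ vacuous when $S$ has no zero, i.e.\ $q\pi_b=q$ for every $b$). Granting this, I finish as follows. In the zero-free case a state with $q\pi_b=q$ for all $b$ acts on $B^\omega$ coordinatewise by $\tau_q$, so $q^m=q^n$ as soon as $\tau_q^m=\tau_q^n$; as $\tau_q$ is a transformation of the finite set $B$, this occurs for some $m\neq n$, making $q$ periodic. In the zero case the hypotheses of Lemma~\ref{lem:periodic} are exactly met, and it again yields that $q$ is periodic. Either way $q$ represents a non-zero periodic element of $\N^0$, which is impossible.

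The main obstacle is establishing the claim, and this is where the additive structure of $\N^0$ must be exploited. The natural candidate is the unique state $q$ of least positive value $m$: its sections are themselves states, hence have value $\ge m$ or equal $z$, so it remains only to exclude sections of value $>m$. The tool I would use is that two words over $Q$ are equal in $S$ precisely when they have the same total value, so a section $r=q\pi_b$ with $v(r)=c>m$ forces the relation $q^{\,c}=r^{\,m}$ in $\Sigma(\A)$. Comparing the induced section relations $q^{\,c}|_\alpha=r^{\,m}|_\alpha$ and the top-level transformations $\tau_q^{\,c}=\tau_r^{\,m}$ at points $\alpha$ lying on the eventual cycles of $\tau_q$ — where sections of powers grow at a fixed linear rate governed by the values of the states visited — should produce either a value strictly below $m$ or two equal distinct powers, contradicting minimality or aperiodicity. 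Controlling this book-keeping, and in particular tracking how the absorbing zero can be picked up along a branch without corrupting the count, is the delicate step. It is precisely the \emph{absence of an identity} in $\N^0$ that makes the conclusion hold: in a group such as $\Z$ the sections of the minimal state may collapse to the identity (as for the binary odometer $a=(1,a)\sigma$), which carries no value and blocks any periodicity conclusion, whereas in $\N^0$ the only element to which a section can collapse is the zero, to which Lemma~\ref{lem:periodic} applies.
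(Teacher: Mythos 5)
Your scaffolding is sound and matches the paper's strategy up to the decisive point: argue by contradiction, produce an element whose wreath recursion involves only itself and the zero, and invoke Lemma~\ref{lem:periodic} against the aperiodicity of non-zero elements of $\N^0$. Your normalisations are legitimate, and your observation that the sections of the zero are all equal to the zero is correct (though the relation $z|_b\, z|_{b\tau_z} = z|_b$ yields this directly in $\N^0$ by comparing values, not because the sections are themselves idempotent, as you claim). But the heart of your argument --- the claim that the unique state $q$ of least positive value $m$ has all sections in $\{q,z\}$ --- is precisely where you stop proving and start hoping: you write that comparing $q^c|_\alpha = r^m|_\alpha$ along the cycles of $\tau_q$ ``should produce'' a value below $m$ or two equal powers, and you call the bookkeeping ``the delicate step.'' That bookkeeping \emph{is} the theorem, and as set up it does not close. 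At every depth, a section of $q^c$ is a length-$c$ product of states and a section of $r^m$ is a length-$m$ product; with only the lower bound $m$ on state values you get $v(q^c|_\alpha)\ge mc$ and $v(r^m|_\alpha)\ge m^2$, which are perfectly compatible, so one-sided minimality gives no squeeze and no evident pigeonhole. (The same gap infects your zero-free case, where you again need $q\pi_b = q$ for all $b$.)

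The missing idea, which is what the paper's proof actually runs on, is to use the \emph{maximal} value in tandem with the minimal one. Let $l$ and $k$ be the least and greatest values represented by states and exploit the commutation relation $q_k^l =_S kl = lk =_S q_l^k$. Expanding both sides via \eqref{eq:wreath}, each section $w_i$ of $q_k^l$ is a length-$l$ product of states, hence has value at most $kl$, with equality only if every factor has value $k$; the corresponding section $x_i$ of $q_l^k$ is a length-$k$ product, hence has value at least $lk$ unless it is $z$. Since $w_i =_S x_i$, every non-$z$ section must represent exactly the element $kl$, so the \emph{element} $kl$ (a product, not the minimal state) recurses only to itself and $z$ and is periodic by Lemma~\ref{lem:periodic} --- the desired contradiction (the degenerate case $k=l$ is immediate, since then the single state recurses only to itself and $z$). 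This two-sided sandwich between extremal values is what replaces your unproven claim about the minimal state; indeed the paper never establishes, and never needs, any property of the minimal state's own sections. Without that idea, or a completed version of your cycle-growth analysis, your proposal is an accurate plan with its central step missing.
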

\begin{proof}
  Let $S$ be a subsemigroup of $\N^0$ that is not the trivial subsemigroup.  Let $z$ denote the zero of $\N^0$; note
  that $z$ may not be an element of $S$. Suppose that $S = \Sigma(\A)$ for some automaton $\A$, and let $Q$ be the state
  set of $\A$.  We denote a state in $Q$ representing an element $i$ of $\N\cup \{z\}$ by $q_i$. Note that since $S$ is
  non-trivial, there exists elements $q_i \in Q$ with $i \neq z$. Let $k\in \N$ be maximal such that $q_k\in Q$ and let
  $l\in \N$ be minimal such that $q_l\in Q$. Since $q_k,q_l \in Q$, the wreath recursions for $q_k$ and $q_l$ are given
  by $q_k = (u_1,\ldots,u_d)\rho$ and $q_l = (v_1,\ldots,v_d)\sigma$ for some $u_i,v_i\in\{z,q_l,q_{l+1},\ldots, q_k\}$.
  If $k=l$, then this means that $q_k$ recurses only to $z$ and itself and is thus periodic by Lemma~\ref{lem:periodic},
  which contradicts the fact $S$ has no periodic elements except possibly $0$.

  So assume $l<k$.  Apply the
  formula $\eqref{eq:wreath}$ for multiplying wreath recursions to compute a wreath recursion for $q_k^l$ from the wreath recursion for $q_k$. Similarly, compute a wreath recursion for $q_l^k$ from the wreath recursion for $q_l$.  This gives two wreath recursions
\begin{align*}
q_k^l &= (w_1,\ldots,w_d)\rho^l\\
q_l^k &= (x_1,\ldots,x_d)\sigma^k
\end{align*}
where each $w_i$ is in $Q^l$, while each $x_i$ is in $Q^k$, and $w_i=_S x_i$ for all $i$ since
$q_k^l =_S kl = lk =_S q_l^k$.  If $w_i =_S z$ for all $i$, then $q_{kl}$ is periodic, so assume that some
$w_i\neq_S z$.  Clearly one way to achieve $w_i=_S x_i$ is if $w_i=q_k^l$ and $x_i=q_l^k$.  And indeed this is the only
possible solution, since $w_i$ is a product of length $l$ and any length-$l$ product of states other than $q_k^l$ itself
represents a smaller element of $S$ than $q_k^l$, while $x_i$ is a product of length $k$, and any length-$k$ product of
states other than $q_l^k$ itself represents a larger element of $S$ than $q_l^k$ (since the states are all in
$\{q_l,\ldots,q_k\}$). Hence if $w_i =_S x_i \neq_S z$, then $w_i = q_k^l$, and $x_i = q_l^k$. This means that the
wreath recursions for $q_k^l$ and $q_l^k$ recurse only to themselves and $z$ and are thus periodic, again by
Lemma~\ref{lem:periodic}.  But this is a contradiction, since $S$ has no periodic elements except possibly $0$, and
hence the automaton $\A$ cannot exist.
\end{proof}

\begin{question}
  Is there some general technique -- perhaps a kind of `pumping lemma' -- that gives a general tool for proving a
  finitely generated residually finite semigroup is not an automaton semigroup?
\end{question}

\section{Acknowledgements}

We thank the anonymous referee and Jan Philipp W\"{a}chter for pointing out errors in previous versions of this paper.


\end{document}